\newtheorem{thm}{Theorem}
\newtheorem{cor}[thm]{Corollary}
\newtheorem{lemma}[thm]{Lemma}
\newtheorem{prop}[thm]{Proposition}
\newtheorem{conj}[thm]{Conjecture}
\newtheorem{rem}[thm]{Remark}
\numberwithin{equation}{section}
\DeclareMathAlphabet{\mathsfsl}{OT1}{cmss}{m}{sl}
\newcommand{\term}{\emph}
\newcommand{\cnst}[1]{\mathrm{#1}}
\renewcommand{\phi}{\varphi}
\newcommand{\eps}{\varepsilon}
\newcommand{\defby}{\overset{\mathrm{\scriptscriptstyle{def}}}{=}}
\newcommand{\econst}{\mathrm{e}}
\newcommand{\iunit}{\mathrm{i}}
\newcommand{\onevct}{\mathbf{e}}
\newcommand{\Id}{\mathbf{I}}
\newcommand{\coll}[1]{\mathscr{#1}}
\newcommand{\Cspace}[1]{\mathbb{C}^{#1}}
\newcommand{\abs}[1]{\left\vert {#1} \right\vert}
\newcommand{\Probe}[1]{\mathbb{P}\left({#1}\right)}
\newcommand{\Prob}[1]{\mathbb{P}\left\{ {#1} \right\}}
\newcommand{\Expect}{\operatorname{\mathbb{E}}}
\newcommand{\vct}[1]{\bm{#1}}
\newcommand{\mtx}[1]{\bm{#1}}
\newcommand{\adj}{*}
\newcommand{\trace}{\operatorname{trace}}
\newcommand{\ip}[2]{\left\langle {#1},\ {#2} \right\rangle}
\newcommand{\absip}[2]{\abs{\ip{#1}{#2}}}
\newcommand{\norm}[1]{\left\Vert {#1} \right\Vert}
\newcommand{\normsq}[1]{\norm{#1}^2}
\newcommand{\fnorm}[1]{\norm{#1}_{\mathrm{F}}}
\newcommand{\fnormsq}[1]{\fnorm{#1}^2}
\newcommand{\pnorm}[2]{\norm{#2}_{#1}}
\newcommand{\DFT}{\mathbf{F}}
\begin{document}

\title[Spikes and Sines]
{On the Linear Independence of Spikes and Sines}

\author{Joel A.\ Tropp}

\thanks{The author is with Applied \& Computational Mathematics, MC 217-50, California Institute of Technology, 1200 E.~California Blvd., Pasadena, CA 91125-5000.
E-mail: \url{jtropp@acm.caltech.edu}.  Supported by NSF 0503299.}

\date{4 September 2007. Revised 15 April 2008.}

\begin{abstract}
The purpose of this work is to survey what is known about the linear independence of spikes and sines.  The paper provides new results for the case where the locations of the spikes and the frequencies of the sines are chosen at random.  This problem is equivalent to studying the spectral norm of a random submatrix drawn from the discrete Fourier transform matrix.  The proof depends on an extrapolation argument of Bourgain and Tzafriri.
\end{abstract}

\keywords{Fourier analysis, local theory, random matrix, sparse approximation, uncertainty principle}

\subjclass[2000]{Primary: 46B07, 47A11, 15A52.  Secondary: 41A46.}

\maketitle

\section{Introduction}

An investigation central to sparse approximation is whether a given collection of impulses and complex exponentials is linearly independent.  This inquiry appears in the early paper of Donoho and Stark on uncertainty principles \cite{DS89:Uncertainty-Principles}, and it has been repeated and amplified in the work of subsequent authors.  Indeed, researchers in sparse approximation have developed a much deeper understanding of general dictionaries by probing the structure of the unassuming dictionary that contains only spikes and sines.

The purpose of this work is to survey what is known about the linear independence of spikes and sines and to provide some new results on random subcollections chosen from this dictionary.  The method is adapted from a paper of Bourgain--Tzafriri \cite{BT91:Problem-Kadison-Singer}.  The advantage of this approach is that it avoids some of the complicated combinatorial arguments that are used in related works, e.g., \cite{CRT06:Robust-Uncertainty}.  The proof also applies to other types of dictionaries, although we do not pursue this line of inquiry here.

\subsection{Spikes and Sines}

Let us shift to formal discussion.  We work in the inner-product space $\Cspace{n}$, and we use the symbol ${}^\adj$ for the conjugate transpose.  Define the Hermitian inner product $\ip{\vct{x}}{\vct{y}} = \vct{y}^\adj \vct{x}$ and the $\ell_2$ vector norm $\norm{\vct{x}} = \absip{ \vct{x} }{\vct{x}}^{1/2}$.  We also write $\norm{ \cdot }$ for the spectral norm, i.e., the operator norm for linear maps from $(\Cspace{n}, \ell_2)$ to itself.

We consider two orthonormal bases for $\Cspace{n}$.  The standard basis $\{ \onevct_j : j = 1, 2, \dots, n \}$ is given by
$$
\onevct_j(t) = \begin{cases}
1, & t = j \\
0, & t \neq j
\end{cases}
\qquad\text{for $t = 1, 2, \dots, n$}.
$$
We often refer to the elements of the standard basis as \term{spikes} or \term{impulses}.  The Fourier basis $\{ \mathbf{f}_j : j = 1, 2, \dots, n \}$ is given by
$$
\mathbf{f}_j(t) = \frac{1}{\sqrt{n}} \econst^{2\pi\iunit j t/n}
\qquad\text{for $t = 1, 2, \dots, n$}.
$$
We often refer to the elements of the Fourier basis as \term{sines} or \term{complex exponentials}.


The \term{discrete Fourier transform} (DFT) is the $n \times n$ matrix $\DFT$ whose rows are $\mathbf{f}_1^\adj, \mathbf{f}_2^\adj, \dots, \mathbf{f}_n^\adj$.  The matrix $\DFT$ is unitary.  In particular, its spectral norm $\norm{ \DFT } = 1$.  Moreover, the entries of the DFT matrix are bounded in magnitude by $n^{-1/2}$.  Let $T$ and $\Omega$ be subsets of $\{1, 2, \dots, n\}$.  We write $\DFT_{\Omega T}$ for the restriction of $\DFT$ to the rows listed in $\Omega$ and the columns listed in $T$.  Since $\DFT_{\Omega T}$ is a submatrix of the DFT matrix, its spectral norm does not exceed one.

We use the analysts' convention that upright letters represent universal constants.  We reserve $\cnst{c}$ for small constants and $\cnst{C}$ for large constants.  The value of a constant may change at each appearance.

\subsection{Linear Independence}

Let $T$ and $\Omega$ be subsets of $\{1, 2, \dots, n\}$.  Consider the collection of spikes and sines listed in these sets:
$$
\coll{X} = \coll{X}(T, \Omega) =
	\{ \onevct_j : j \in T \} \cup
	\{ \mathbf{f}_j : j \in \Omega \}.
$$
Today, we will discuss methods for determining when $\coll{X}$ is linearly independent.  Since a linearly independent collection in $\Cspace{n}$ contains at most $n$ vectors, we obtain a simple necessary condition $\abs{T} + \abs{\Omega} \leq n$.  Developing sufficient conditions, however, requires more sophistication.

We approach the problem by studying the Gram matrix $\mtx{G} = \mtx{G}(\coll{X})$, whose entries are the inner products between pairs of elements from $\coll{X}$.  It is easy to check that the Gram matrix can be expressed as
$$
\mtx{G} = \begin{bmatrix}
	\Id_{\abs{\Omega}} & \DFT_{\Omega T} \\
	(\DFT_{\Omega T})^\adj & \Id_{\abs{T}}
\end{bmatrix}
$$
where $\Id_m$ denotes an $m \times m$ identity matrix and $\abs{\cdot}$ denotes the cardinality of a set.

It is well known that the collection $\coll{X}$ is linearly independent if and only if its Gram matrix is nonsingular.  The Gram matrix is nonsingular if and only if its eigenvalues are nonzero.  A basic (and easily confirmed) fact of matrix analysis is that the extreme eigenvalues of $\mtx{G}$ are $1 \pm \norm{ \DFT_{\Omega T} }$.  Therefore, \emph{the collection $\coll{X}$ is linearly independent if and only if $\norm{ \DFT_{\Omega T} } < 1$}.

One may also attempt to quantify the extent to which collection $\coll{X}$ is linearly independent.  To that end, define the \term{condition number} $\kappa$ of the Gram matrix, which is the ratio of its largest eigenvalue to its smallest eigenvalue:
$$
\kappa( \mtx{G} ) = \frac{1 + \norm{ \DFT_{\Omega T} }}
	{1 - \norm{ \DFT_{\Omega T} }}.
$$
If $\norm{ \DFT_{\Omega T} }$ is bounded away from one, then the condition number is constant.  One may interpret this statement as evidence the collection $\coll{X}$ is strongly linearly independent.  The reason is that the condition number is the reciprocal of the relative spectral-norm distance between $\mtx{G}$ and the nearest singular matrix \cite[p.\ 33]{Dem97:Applied-Numerical}.  As we have mentioned, $\mtx{G}$ is singular if and only if $\coll{X}$ is linearly dependent.

This article focuses on statements about linear independence, rather than conditioning.  Nevertheless, many results can be adapted to obtain precise information about the size of $\norm{ \DFT_{\Omega T} }$.

\subsection{Summary of Results}

The major result of this paper to show that a random collection of spikes and sines is extremely likely to be strongly linearly independent, provided that the total number of spikes and sines does not exceed a constant proportion of the ambient dimension.  We also provide a result which shows that the norm of a properly scaled random submatrix of the DFT is at most constant with high probability.  For a more detailed statement of these theorems, turn to Section \ref{sec:both-rand}.

\subsection{Outline}

The next section provides a survey of bounds on the norm of a submatrix of the DFT matrix.  It concludes with detailed new results for the case where the submatrix is random.  Section \ref{sec:proofs} contains a proof of the new results.  Numerical experiments are presented in Section \ref{sec:exper}, and Section \ref{sec:future} describes some additional research directions.  Appendix \ref{app:extrap} contains a proof of the key background result.

\section{History and Results}

The strange, eventful history of our problem can be viewed as a sequence of bounds on norm of the matrix $\DFT_{\Omega T}$.  Results in the literature can be divided into two classes: the case where the sets $\Omega$ and $T$ are fixed and the case where one of the sets is random.  In this work, we investigate what happens when both sets are chosen randomly.

\subsection{Bounds for fixed sets}

An early result, due to Donoho and Stark \cite{DS89:Uncertainty-Principles}, asserts that an \emph{arbitrary} collection of spikes and sines is linearly independent, provided that the collection is not too big.

\begin{thm}[Donoho--Stark] \label{thm:up}
Suppose that $\abs{T} \abs{\Omega} < n$.  Then $\norm{ \DFT_{\Omega T} } < 1$.
\end{thm}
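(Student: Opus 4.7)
The plan is to reduce the spectral-norm bound to a Frobenius-norm computation, using the fact (already noted in the excerpt) that every entry of $\DFT$ has magnitude exactly $n^{-1/2}$.

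First, I recall the universal inequality $\norm{\mtx{M}} \leq \fnorm{\mtx{M}}$ valid for any matrix $\mtx{M}$; this follows immediately from comparing the singular value decomposition expressions $\norm{\mtx{M}}^2 = \smax(\mtx{M})^2$ and $\fnormsq{\mtx{M}} = \sum_k \sigma_k(\mtx{M})^2$. Next, I would compute the Frobenius norm of the submatrix directly. Since $\DFT_{\Omega T}$ is an $\abs{\Omega}\times\abs{T}$ matrix whose entries all have magnitude $n^{-1/2}$, we get
\[
\fnormsq{\DFT_{\Omega T}} \;=\; \sum_{i\in\Omega}\sum_{j\in T} \abssq{\mathbf{f}_i(j)} \;=\; \frac{\abs{\Omega}\abs{T}}{n}.
\]
Combining the two displays,
\[
\norm{\DFT_{\Omega T}}^2 \;\leq\; \fnormsq{\DFT_{\Omega T}} \;=\; \frac{\abs{T}\abs{\Omega}}{n} \;<\; 1,
\]
where the last inequality is the hypothesis $\abs{T}\abs{\Omega} < n$. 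Taking square roots yields $\norm{\DFT_{\Omega T}} < 1$, which by the equivalence established earlier in the excerpt is precisely the statement that $\coll{X}(T,\Omega)$ is linearly independent.

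There is essentially no obstacle here: the argument is a one-line Frobenius-norm estimate exploiting the flat magnitudes of the Fourier matrix. The only subtlety worth noting is that this proof yields a quantitative strengthening, namely $\norm{\DFT_{\Omega T}} \leq \sqrt{\abs{T}\abs{\Omega}/n}$, so one obtains an upper bound on the condition number $\kappa(\mtx{G})$ whenever $\abs{T}\abs{\Omega}$ is bounded away from $n$. The looseness of the Frobenius bound relative to the spectral norm is what motivates the sharper bounds and randomized analyses discussed in the sequel; this elementary argument cannot exceed the threshold $\abs{T}\abs{\Omega} < n$, even though the true linear-independence threshold is much larger for generic choices of $T$ and $\Omega$.
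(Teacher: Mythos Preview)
Your proof is correct and essentially identical to the paper's own argument: both bound the spectral norm by the Frobenius norm and then use the fact that all entries of $\DFT_{\Omega T}$ have magnitude $n^{-1/2}$ to get $\normsq{\DFT_{\Omega T}} \leq \fnormsq{\DFT_{\Omega T}} \leq \abs{\Omega}\abs{T}/n < 1$. The only difference is cosmetic---the paper states the bound in one sentence, while you include the singular-value justification and some additional commentary on the quantitative consequence.
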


The original argument relies on the fact that $\DFT$ is a Vandermonde matrix.  We present a short proof that is completely analytic.  A similar argument using an inequality of Schur yields the more general result of Elad and Bruckstein \cite[Thm.\ 1]{EB02:Generalized-Uncertainty}.

\begin{proof}
The entries of the $\abs{\Omega} \times \abs{T}$ matrix $\DFT_{\Omega T}$ are uniformly bounded by $n^{-1/2}$.  Since the Frobenius norm dominates the spectral norm, $\normsq{ \DFT_{\Omega T} } \leq \fnormsq{ \DFT_{\Omega T} } \leq \abs{\Omega} \abs{T} / n $.  Under the hypothesis of the theorem, this quantity does not exceed one.
\end{proof}

Theorem \ref{thm:up} has an elegant corollary that follows immediately from the basic inequality for geometric and arithmetic means.

\begin{cor}[Donoho--Stark] \label{cor:additive-up}
Suppose that $\abs{T} + \abs{\Omega} < 2 \sqrt{n}$.  Then $\norm{ \DFT_{\Omega T} } < 1$.
\end{cor}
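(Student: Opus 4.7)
The plan is to reduce Corollary \ref{cor:additive-up} directly to Theorem \ref{thm:up} by replacing the sum $|T|+|\Omega|$ with the product $|T||\Omega|$. The bridge between these two quantities is the standard AM-GM inequality, which the corollary explicitly signposts.

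More concretely, I would start from the arithmetic-geometric mean inequality $\sqrt{|T|\,|\Omega|} \leq \tfrac{1}{2}(|T| + |\Omega|)$, valid for any nonnegative reals. Under the hypothesis $|T| + |\Omega| < 2\sqrt{n}$, this immediately yields $\sqrt{|T|\,|\Omega|} < \sqrt{n}$, and squaring gives $|T|\,|\Omega| < n$. At this point the hypothesis of Theorem \ref{thm:up} is in force, so the conclusion $\norm{\DFT_{\Omega T}} < 1$ follows from that theorem.

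There is essentially no obstacle here: the only content is the observation that controlling the sum controls the product, and the quadratic necessary condition $|T|\,|\Omega| < n$ from Theorem \ref{thm:up} then translates into the additive sufficient condition $|T| + |\Omega| < 2\sqrt{n}$. No further analytic machinery is required, and the argument is tight in the sense that equality in AM-GM occurs precisely when $|T| = |\Omega|$, which is the regime where the additive form is sharpest relative to the multiplicative one.
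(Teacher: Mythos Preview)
Your proposal is correct and matches the paper's approach exactly: the paper states that the corollary ``follows immediately from the basic inequality for geometric and arithmetic means'' applied to Theorem~\ref{thm:up}, which is precisely the AM--GM reduction you wrote out.
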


The contrapositive of Theorem \ref{thm:up} is usually interpreted as an \term{discrete uncertainty principle}: a vector and its discrete Fourier transform cannot simultaneously be sparse.  To express this claim quantitatively, we define the $\ell_0$ ``quasinorm'' of a vector by $\pnorm{0}{\vct{\alpha}} = \abs{\{ j : \alpha_j \neq 0 \} }$.

\begin{cor}[Donoho--Stark]
Fix a vector $\vct{x} \in \Cspace{n}$.  Consider the representations of $\vct{x}$ in the standard basis and the Fourier basis:
$$
\vct{x} = \sum\nolimits_{j = 1}^n \alpha_j \onevct_j
\qquad\text{and}\qquad
\vct{x} = \sum\nolimits_{j = 1}^n \beta_j \mathbf{f}_j.
$$
Then $\pnorm{0}{ \vct{\alpha} } \pnorm{0}{ \vct{\beta} } \geq n$.
\end{cor}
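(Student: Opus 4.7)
The plan is to obtain this as an immediate contrapositive of Theorem \ref{thm:up}, translating the sparsity statement into a statement about linear (in)dependence of a collection of spikes and sines.

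First I would dispose of the trivial case $\vct{x} = \zerovct$, which makes both coefficient vectors zero and should be excluded (or handled by the usual convention $0 \cdot 0 = \infty$). So assume $\vct{x} \neq \zerovct$, which forces both $\vct{\alpha}$ and $\vct{\beta}$ to be nonzero. Define
$$
T = \supp{\vct{\alpha}} \quad\text{and}\quad \Omega = \supp{\vct{\beta}},
$$
so $|T| = \pnorm{0}{\vct{\alpha}}$ and $|\Omega| = \pnorm{0}{\vct{\beta}}$. By choice of $T$ and $\Omega$, we have the representations
$$
\vct{x} = \sum_{j \in T} \alpha_j \onevct_j = \sum_{j \in \Omega} \beta_j \mathbf{f}_j,
$$
so the identity $\sum_{j \in T} \alpha_j \onevct_j - \sum_{j \in \Omega} \beta_j \mathbf{f}_j = \zerovct$ is a \emph{nontrivial} linear dependence among the elements of the collection $\coll{X}(T,\Omega)$.

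Next I would invoke the equivalence established in the previous subsection: the collection $\coll{X}(T,\Omega)$ is linearly independent if and only if $\norm{\DFT_{\Omega T}} < 1$. Since we just exhibited a linear dependence, we must have $\norm{\DFT_{\Omega T}} \geq 1$. Now apply the contrapositive of Theorem \ref{thm:up}: if $\norm{\DFT_{\Omega T}} \geq 1$, then $|T|\,|\Omega| \geq n$. Substituting the cardinalities gives the advertised inequality $\pnorm{0}{\vct{\alpha}} \pnorm{0}{\vct{\beta}} \geq n$.

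There is no real obstacle; the only care required is to recognize that the two distinct expansions of a single nonzero vector automatically produce a nontrivial dependence in $\coll{X}(T,\Omega)$, at which point Theorem \ref{thm:up} does all of the work. No new estimates or combinatorics are needed.
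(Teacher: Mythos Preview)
Your argument is correct and is exactly the approach the paper indicates: the corollary is presented as the contrapositive of Theorem~\ref{thm:up}, and your write-up fills in precisely those details. No further comment is needed.
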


The example of the \term{Dirac comb} shows that Theorem \ref{thm:up} and its corollaries are sharp.  Suppose that $n$ is a square, and let $T = \Omega = \{ \sqrt{n}, 2\sqrt{n}, 3\sqrt{n}, \dots, n \}$.  On account of the Poisson summation formula,
$$
\sum\nolimits_{j \in T} \onevct_j
	= \sum\nolimits_{j \in \Omega} \mathbf{f}_j.
$$
Therefore, the set of vectors $\coll{X}(T, \Omega)$ is linearly dependent and $\abs{T}\abs{\Omega} = n$.

The substance behind this example is that the abelian group $\mathbb{Z} / \mathbb{Z}_{n}$ contains nontrivial subgroups when $n$ is composite.  The presence of these subgroups leads to arithmetic cancelations for properly chosen $T$ and $\Omega$.  See \cite{DS89:Uncertainty-Principles} for additional discussion.

One way to eradicate the cancelation phenomenon is to require that $n$ be prime.  In this case, the group $\mathbb{Z} / \mathbb{Z}_n$ has no nontrivial subgroup.  As a result, much larger collections of spikes and sines are linearly independent.  Compare the following result with Corollary \ref{cor:additive-up}.

\begin{thm}[Tao \protect{\cite[Thm.\ 1.1]{Tao04:Uncertainty-Principle}}] \label{thm:tao-up}
Suppose that $n$ is prime.  If $\abs{T} + \abs{\Omega} \leq n$, then $\norm{ \DFT_{\Omega T} } < 1$.
\end{thm}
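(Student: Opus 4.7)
The plan is to reduce the claim to Chebotarev's theorem, which asserts that when $n$ is prime, \emph{every} square submatrix of the DFT matrix $\DFT$ has a nonzero determinant. Granting this algebraic fact, the rest of the argument is a short exercise in linear algebra.

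First I invoke the equivalence already established in the excerpt: $\norm{\DFT_{\Omega T}} < 1$ if and only if $\coll{X}(T, \Omega)$ is linearly independent. So it suffices to show that no nontrivial combination vanishes:
$$
\sum_{j \in T} \alpha_j \onevct_j + \sum_{k \in \Omega} \beta_k \mathbf{f}_k = \vct{0}.
$$
Rearranging, the Fourier part $\vct{y} = \sum_{k \in \Omega} \beta_k \mathbf{f}_k$ must be supported inside $T$, which in matrix form reads $\DFT_{T^c,\Omega}\, \vct{\beta} = \vct{0}$, where $T^c = \{1,\dots,n\}\setminus T$ and $\vct{\beta}$ is the coefficient vector indexed by $\Omega$.

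The hypothesis $\abs{T}+\abs{\Omega} \leq n$ guarantees $\abs{T^c} = n - \abs{T} \geq \abs{\Omega}$, so I can pick a subset $S \subseteq T^c$ with $\abs{S} = \abs{\Omega}$. The submatrix $\DFT_{S,\Omega}$ is square, and Chebotarev's theorem tells me that it is nonsingular. Restricting the equation $\DFT_{T^c,\Omega}\,\vct{\beta}=\vct{0}$ to the rows indexed by $S$ yields $\DFT_{S,\Omega}\,\vct{\beta}=\vct{0}$, hence $\vct{\beta}=\vct{0}$. The original equation then collapses to $\sum_{j\in T}\alpha_j \onevct_j = \vct{0}$, which forces $\vct{\alpha}=\vct{0}$ since the spikes are linearly independent. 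Linear independence of $\coll{X}(T,\Omega)$ follows.

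The genuine obstacle is Chebotarev's theorem itself --- it is the only place where primality intervenes. Its proof requires some algebraic number theory: one interprets $\det \DFT_{S,\Omega}$ (up to a normalization) as a polynomial with integer coefficients in the primitive $n$-th root of unity $\econst^{2\pi\iunit/n}$, and then exploits the irreducibility of the cyclotomic polynomial $\Phi_n$ together with combinatorial identities available when $\mathbb{Z}/n\mathbb{Z}$ is a field. The Dirac comb example in the excerpt confirms that this primality hypothesis is truly indispensable, since for composite $n$ one already has linear dependence with $\abs{T}\abs{\Omega}=n$.
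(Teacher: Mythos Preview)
Your argument is correct and is precisely the route Tao takes: reduce linear independence of $\coll{X}(T,\Omega)$ to the nonsingularity of every square minor of $\DFT$, which is Chebotarev's lemma. The paper itself does not supply a proof of this theorem; it simply cites Tao and remarks that ``the proof of Theorem~\ref{thm:tao-up} is algebraic in nature, and it does not provide information about conditioning.'' So there is nothing to compare against beyond noting that your sketch fills in exactly what the cited reference does.

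One small notational wrinkle: since the paper defines $\DFT$ with rows $\mathbf{f}_j^{\adj}$, the vector $\vct{y}=\sum_{k\in\Omega}\beta_k\mathbf{f}_k$ restricted to $T^c$ is actually $\overline{\DFT_{T^c,\Omega}}\,\vct{\beta}$ (or equivalently $(\DFT_{\Omega,T^c})^{\adj}\vct{\beta}$), not literally $\DFT_{T^c,\Omega}\vct{\beta}$. This has no effect on the argument, since a matrix is nonsingular if and only if its conjugate is.
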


The proof of Theorem \ref{thm:tao-up} is algebraic in nature, and it does not provide information about conditioning.  Indeed, one expects that some submatrices have norms very near to one.

When $n$ is composite, subgroups of $\mathbb{Z}/\mathbb{Z}_n$ exist, but they have a very rigid structure.  Consequently, one can also avoid cancelations by choosing $T$ and $\Omega$ with care.  In particular, one may consider the situation where $T$ is clustered and $\Omega$ is spread out.  Donoho and Logan \cite{DL92:Signal-Recovery} study this case using the \term{analytic principle of the large sieve}, a powerful technique from number theory that can be traced back to the 1930s.  See the lecture notes \cite{Jam06:Notes-Large} for an engaging introduction and references.

Here, we simply restate the (sharp) large sieve inequality~\cite[LS1.1]{Jam06:Notes-Large} in a manner that exposes its connection with our problem.  The \term{spread} of a set is measured as the difference (modulo $n$) between the closest pair of indices.  Formally, define
$$
{\rm spread}(\Omega) = \min\{ \abs{ j - k \bmod n } : j, k \in \Omega, j \neq k \}
$$
with the convention that the modulus returns values in the symmetric range $\{ - \lceil n/2 \rceil + 1, \dots, \lfloor n/2 \rfloor \}$.  Observe that $\abs{\Omega} \cdot {\rm spread}(\Omega) \leq n$.

\begin{thm}[Large Sieve Inequality] \label{thm:large-sieve}
Suppose that $T$ is a block of adjacent indices:
\begin{equation} \label{eqn:T-block}
T = \{ m + 1, m + 2, \dots, m + \abs{T} \}
\qquad\text{for an integer $m$}.
\end{equation}
For each set $\Omega$, we have
$$
\norm{ \DFT_{\Omega T} }^2
	\leq \frac{ \abs{T} + n/{\rm spread}(\Omega) - 1 }{n}.
$$
In particular, when $T$ has form \eqref{eqn:T-block}, the bound
$\abs{T} + n/{\rm spread}(\Omega) < n + 1$ implies that $\norm{\DFT_{\Omega T}} < 1$.
\end{thm}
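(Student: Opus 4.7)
The plan is to recognize this theorem as a direct reformulation of the classical large sieve inequality from analytic number theory, which the text explicitly alludes to. The work is bookkeeping: translate the matrix norm into a trigonometric polynomial extremal problem, check that the hypotheses on $T$ and $\Omega$ match the classical hypotheses, and then invoke \cite[LS1.1]{Jam06:Notes-Large}.

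First I would unfold $\normsq{\DFT_{\Omega T}}$ by duality. For any $\vct{a} \in \Cspace{\abs{T}}$,
$$
\normsq{\DFT_{\Omega T} \vct{a}} = \frac{1}{n} \sum_{j \in \Omega} \left\vert \sum_{t \in T} a_t \, \econst^{-2\pi\iunit j t / n} \right\vert^2,
$$
so bounding $\normsq{\DFT_{\Omega T}}$ is equivalent to producing a universal constant $K$ with $\sum_{j \in \Omega} \abs{S(j/n)}^2 \leq K \enormsq{\vct{a}}$, where $S(x) = \sum_{t \in T} a_t \econst^{-2\pi \iunit t x}$.

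Second, I would use the hypothesis \eqref{eqn:T-block} that $T$ is an interval to reduce $S$ to a polynomial of degree $\abs{T}-1$ in $\econst^{-2\pi\iunit x}$. Writing $t = m + s$ for $s = 1, \dots, \abs{T}$, the factor $\econst^{-2\pi\iunit m x}$ comes out of every term of $S(x)$ and disappears once we take absolute values. This reduction is the one place where the block structure of $T$ is essential, and it is the step most at risk of being botched if one tries to handle a general $T$.

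Third, I would observe that the evaluation points $\{j/n : j \in \Omega\}$ on the torus $\Rspace{}/\mathbb{Z}$ are separated by at least $\delta \defby \mathrm{spread}(\Omega)/n$, directly from the definition of spread. Fourth, I would apply the sharp large sieve inequality \cite[LS1.1]{Jam06:Notes-Large}: for any $\delta$-separated set of points on the circle and any trigonometric polynomial $P(x) = \sum_{s=1}^{N} b_s \econst^{-2\pi\iunit s x}$ with $N$ terms,
$$
\sum_\nu \abs{P(x_\nu)}^2 \leq (N + \delta^{-1} - 1) \sum_{s=1}^{N} \abs{b_s}^2.
$$
Plugging in $N = \abs{T}$, $\delta = \mathrm{spread}(\Omega)/n$, and dividing by $n$ yields $\normsq{\DFT_{\Omega T}} \leq (\abs{T} + n/\mathrm{spread}(\Omega) - 1)/n$, which is exactly the claim. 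The ``in particular'' clause is immediate: the hypothesis $\abs{T} + n/\mathrm{spread}(\Omega) < n + 1$ forces the numerator of that bound to be strictly less than $n$, hence $\norm{\DFT_{\Omega T}} < 1$.

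The real content lives in the large sieve inequality, which is a nontrivial classical result; the main obstacle in this note is therefore not the proof but the translation, namely keeping the normalization $1/\sqrt{n}$, the distinction between spacing on $\mathbb{Z}/n\mathbb{Z}$ and on $\Rspace{}/\mathbb{Z}$, and the interval-structure reduction of $T$ all consistent with the form in which the large sieve is stated in \cite{Jam06:Notes-Large}.
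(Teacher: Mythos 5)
Your proposal is correct and is exactly the route the paper intends: the paper offers no independent proof, stating that the theorem is simply the sharp large sieve inequality \cite[LS1.1]{Jam06:Notes-Large} restated in matrix language, and your translation (duality to a trigonometric polynomial with $\abs{T}$ consecutive frequencies via the block structure of $T$, separation $\mathrm{spread}(\Omega)/n$ on the torus, then the sharp constant $N + \delta^{-1} - 1$) carries out that reformulation correctly, including the ``in particular'' clause.
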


Of course, we can reverse the roles of $T$ and $\Omega$ in this theorem on account of duality.  The same observation applies to other results where the two sets do not participate in the same way.

The discussion above shows that there are cases where delicately constructed sets $T$ and $\Omega$ lead to linearly dependent collections of spikes and sines.  Explicit conditions that rule out the bad examples are unknown, but nevertheless the bad examples turn out to be quite rare.  To quantify this intuition, we must introduce probability.




%

\subsection{Bounds when one set is random}

In their work \cite[Sec.\ 7.3]{DS89:Uncertainty-Principles}, Donoho and Stark discuss numerical experiments designed to study what happens when one of the sets of spikes or sines is drawn at random.  They conjecture that the situation is vastly different from the case where the spikes and sines are chosen in an arbitrary fashion.
Within the last few years, researchers have made substantial theoretical progress on this question.  Indeed, we will see that the linearly dependent collections form a vanishing proportion of all collections, provided that the total number of spikes and sines is slightly smaller than the dimension $n$ of the vector space.

First, we describe a probability model for random sets.  Fix a number $m \leq n$, and consider the class $\coll{S}_m$ of index sets that have cardinality $m$:
$$
\coll{S}_m = \{ S : S \subset \{1, 2, \dots, n\}
	\text{ and } \abs{S} = m \}.
$$
We may construct a random set $\Omega$ by drawing an element from $\coll{S}_m$ uniformly at random.  That is,
$$
\Prob{ \Omega = S } = \abs{\coll{S}_m}^{-1}
\qquad\text{for each $S \in \coll{S}_m$}.
$$
In the sequel, we substitute the symbol $\abs{\Omega}$ for the letter $m$, and we say ``$\Omega$ is a random set with cardinality $\abs{\Omega}$'' to describe this type of random variable.  This phrase should cause no confusion, and it allows us to avoid extra notation for the cardinality.


In the sparse approximation literature, the first rigorous result on random sets is due to Cand{\`e}s and Romberg.  They study the case where one of the sets is arbitrary and the other set is chosen at random.  Their proof draws heavily on their prior work with Tao \cite{CRT06:Robust-Uncertainty}.

\begin{thm}[Cand{\`e}s--Romberg \protect{\cite[Thm.\ 3.2]{CR06:Quantitative-Robust}}] \label{thm:qrup}
Fix a number $s \geq 1$.  Suppose that
\begin{equation} \label{eqn:qrup-bd}
\abs{T} + \abs{\Omega} \leq
\frac{ \cnst{c} n }{\sqrt{ (s + 1) \log n }}.
\end{equation}
If $T$ is an arbitrary set with cardinality $\abs{T}$ and $\Omega$ is a random set with cardinality $\abs{\Omega}$, then
$$
\Prob{ \normsq{ \DFT_{\Omega T} } \geq 0.5 }
	\leq \cnst{C} ((s + 1) \log n)^{1/2} n^{-s}.
$$
The numerical constant $\cnst{c} \geq 0.2791$, provided that $n \geq 512$.
\end{thm}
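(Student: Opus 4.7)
The plan is to study the spectral norm of the Gram matrix $M = (\DFT_{\Omega T})^\adj \DFT_{\Omega T}$, whose $(t,t')$ entry equals $n^{-1} \sum_{\omega \in \Omega} \econst^{2\pi\iunit\omega(t-t')/n}$. The diagonal of $M$ is identically $\abs{\Omega}/n$, so writing $H := M - (\abs{\Omega}/n) \Id$ gives $\normsq{\DFT_{\Omega T}} = \norm{M} \leq \abs{\Omega}/n + \norm{H}$. Under \eqref{eqn:qrup-bd}, $\abs{\Omega}/n$ is at most $\cnst{c}/\sqrt{(s+1)\log n}$, which tends to zero, so it suffices to show $\norm{H} < \tfrac{1}{2} - o(1)$ outside a set of the stated small measure.

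I would attack $\norm{H}$ by the trace moment method: since $H$ is Hermitian, $\norm{H}^{2k} \leq \trace(H^{2k})$, so Markov gives $\Prob{ \norm{H} \geq \lambda } \leq \lambda^{-2k} \Expect \trace(H^{2k})$ for every positive integer $k$. A standard preliminary step replaces the uniform-cardinality model for $\Omega$ by the Bernoulli-selector model in which each $\omega \in \{1,\ldots,n\}$ is retained independently with probability $p = \abs{\Omega}/n$; this transfer costs only a constant factor in the tail probability. Letting $v_\omega \in \Cspace{\abs{T}}$ denote the restriction of $\mathbf{f}_\omega$ to the coordinates in $T$, one has $H = \sum_\omega (\xi_\omega - p) v_\omega v_\omega^\adj$ in the selector model, and each $v_\omega$ is entrywise bounded by $n^{-1/2}$.

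The core of the argument is a combinatorial estimate of $\Expect \trace(H^{2k})$, obtained by expanding it as a sum over closed walks $(t_1,\ldots,t_{2k}) \in T^{2k}$ and frequency tuples $(\omega_1,\ldots,\omega_{2k}) \in \{1,\ldots,n\}^{2k}$. Centering forces each distinct $\omega$-value to occur at least twice, yielding a factor $p^{r}$ with $r$ the number of distinct frequencies; the uniform bound on $v_\omega$ contributes $n^{-2k}$; and the character-sum identity $\sum_{\omega=1}^n \econst^{2\pi\iunit\omega(t-t')/n} = n \cdot \mathbf{1}[t = t']$ collapses the free $\omega$-sums once the block structure is fixed. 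Classifying the surviving terms by the pair of partitions (on the $\omega$'s and on the $t$'s) and accounting for how the character sums interact with the walk structure, as carried out in \cite{CRT06:Robust-Uncertainty}, yields a moment bound of schematic form $\Expect \trace(H^{2k}) \leq \abs{T} \cdot (\cnst{C} k)^{2k} (\abs{T}\abs{\Omega}/n^2)^{k}$.

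To finish, take $2k \asymp (s+1) \log n$; under hypothesis \eqref{eqn:qrup-bd} the product $(\cnst{C} k)^2 \cdot \abs{T}\abs{\Omega}/n^2$ is a small constant, so choosing $\lambda = \tfrac{1}{2} - p$ makes $\lambda^{-2k} \Expect \trace(H^{2k})$ decay like $n^{-s}$ times the announced polylogarithmic factor. The main obstacle is the combinatorial estimate in Step~3: obtaining a constant inside the factor $(\cnst{C} k)^{2k}$ that is sharp enough to absorb the $(s+1)\log n$ scaling requires an intricate partition count and careful separation of diagonal from off-diagonal contributions in the walk sum. This is precisely the delicate combinatorial argument that the present paper is designed to bypass by replacing it with the Bourgain--Tzafriri extrapolation technique.
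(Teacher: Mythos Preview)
The paper does not prove Theorem~\ref{thm:qrup} at all: it is quoted from Cand{\`e}s--Romberg \cite{CR06:Quantitative-Robust}, and the only commentary the paper offers is the paragraph following the statement, which says that the proof ``ultimately involves a variation of the moment method'' and that ``the key point of the argument is a bound on the expected trace of a high power of the random matrix $\sqrt{n/\abs{\Omega}} \cdot \DFT_{\Omega T}^\adj \DFT_{\Omega T} - \Id_{\abs{T}}$.''  Your sketch is exactly that strategy---center the Gram matrix, bound $\Expect \trace(H^{2k})$ by expanding over closed walks, and then apply Markov with $k$ on the order of $\log n$---so at the level of approach you are in agreement with the cited proof as the paper describes it.

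Two points deserve correction.  First, your schematic moment bound does not close as written.  With $2k \asymp (s+1)\log n$ and the hypothesis $\abs{T}\abs{\Omega}/n^2 \leq \cnst{c}^2/(4(s+1)\log n)$, the quantity $(\cnst{C}k)^2 \cdot \abs{T}\abs{\Omega}/n^2$ is of order $(s+1)\log n$, not a small constant; you have one factor of $k$ too many.  The combinatorics in \cite{CRT06:Robust-Uncertainty} actually deliver a bound whose base behaves like $\cnst{C}k \cdot \abs{T}\abs{\Omega}/n^2$ (equivalently, growth $(\cnst{C}k)^k$ rather than $(\cnst{C}k)^{2k}$), and with that correction the argument does close.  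You flag the exponent as the delicate part, which is fair, but as stated the proposal would not yield the theorem.

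Second, your closing sentence is misleading.  The Bourgain--Tzafriri extrapolation in this paper is \emph{not} offered as an alternative proof of Theorem~\ref{thm:qrup}; it is used only for the paper's own new results, Theorems~\ref{thm:both-rand} and~\ref{thm:both-rand-norm}, where \emph{both} index sets are random.  In the one-random-set regime of Theorem~\ref{thm:qrup} the paper simply cites Cand{\`e}s--Romberg and moves on; indeed the paper notes that the combinatorics there ``depend heavily on the structure of the matrix $\DFT$,'' whereas the extrapolation method works for any bounded-entry unitary-norm matrix and is applied in a different setting.
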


One should interpret this theorem as follows.  Fix a set $T$, and consider all sets $\Omega$ that satisfy \eqref{eqn:qrup-bd}.  Of these, the proportion that are \emph{not} strongly linearly independent is only about $n^{-s}$.  One should be aware that the logarithmic factor in \eqref{eqn:qrup-bd} is intrinsic when one of the sets is arbitrary.  Indeed, one can construct examples related to the Dirac comb which show that the failure probability is constant unless the logarithmic factor is present.  We omit the details.



%

The proof of Theorem \ref{thm:qrup} ultimately involves a variation of the moment method for studying random matrices, which was initiated by Wigner.  The key point of the argument is a bound on the expected trace of a high power of the random matrix $\sqrt{n/\abs{\Omega}} \cdot \DFT_{\Omega T}^\adj \DFT_{\Omega T} - \Id_{\abs{T}}$.  The calculations involve delicate combinatorial techniques that depend heavily on the structure of the matrix $\DFT$.

This approach can also be used to establish that the smallest singular value of $\DFT_{\Omega T}$ is bounded well away from zero \cite[Thm.\ 2.2]{CRT06:Robust-Uncertainty}.  This lower bound is essential in many applications, but we do not need it here.  For extensions of these ideas, see also the work of Rauhut \cite{Rau07:Random-Sampling}.

Another result, similar to Theorem \ref{thm:qrup}, suggests that the arbitrary set and the random set do not contribute equally to the spectral norm.  We present one version, whose derivation is adapted from \cite[Thm.\ 10 et seq.]{Tro07:Conditioning-Random}.

\begin{thm} \label{thm:rdm-subdict}
Fix a number $s \geq 1$.  Suppose that
$$
\abs{T} \log n + \abs{\Omega} \leq
\frac{\cnst{c} n}{s}.
$$
If $T$ is an arbitrary set of cardinality $\abs{T}$ and $\Omega$ is a random set of cardinality $\abs{\Omega}$, then
$$
\Prob{ \normsq{ \DFT_{\Omega T} } \geq 0.5 }
	\leq n^{-s}.
$$
\end{thm}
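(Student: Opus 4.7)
The basic observation is that
$$
\normsq{\DFT_{\Omega T}} = \Big\| \sum\nolimits_{j \in \Omega} \vct{r}_j \vct{r}_j^\adj \Big\|,
$$
where $\vct{r}_j \in \Cspace{|T|}$ is the restriction of the $j$th row of $\DFT$ to the columns in $T$. Since the columns of $\DFT$ indexed by $T$ are orthonormal, $\sum_{j=1}^n \vct{r}_j\vct{r}_j^\adj = \Id_{\abs{T}}$, and each $\vct{r}_j$ has squared norm $\abs{T}/n$. Thus the random self-adjoint matrix above is a random sum of $\abs{\Omega}$ out of $n$ fixed rank-one contributions with bounded operator norms.

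My plan is to pass to a Bernoulli selector model and apply a matrix concentration inequality. Introduce i.i.d.\ Bernoulli selectors $\delta_j$ with mean $p = \abs{\Omega}/n$; conditioning on $\sum_j \delta_j = \abs{\Omega}$ recovers the uniform model on $\coll{S}_{\abs{\Omega}}$, and the conditioning event has probability at least a constant times $n^{-1/2}$. Consequently any tail bound in the Bernoulli model transfers to the uniform model at the cost of a polynomial factor, absorbed by enlarging $s$ by a constant (and adjusting $\cnst{c}$). So it suffices to prove the statement for the random matrix
$$
\mtx{M} \defby \sum_{j=1}^n \delta_j \, \vct{r}_j \vct{r}_j^\adj = p \Id_{\abs{T}} + \mtx{X}, \qquad \mtx{X} \defby \sum_{j=1}^n (\delta_j - p)\, \vct{r}_j \vct{r}_j^\adj.
$$
Under the hypothesis, $p = \abs{\Omega}/n \leq \cnst{c}/s \leq 1/4$ once $\cnst{c}$ is chosen small, so the deterministic piece contributes at most $1/4$ to the spectral norm.

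It remains to bound $\norm{\mtx{X}}$, a sum of independent, self-adjoint, mean-zero matrices. The summands satisfy $\norm{(\delta_j - p) \vct{r}_j \vct{r}_j^\adj} \leq \norm{\vct{r}_j}^2 = \abs{T}/n$, and the matrix variance is
$$
\Big\| \sum\nolimits_j \Expect (\delta_j - p)^2 (\vct{r}_j \vct{r}_j^\adj)^2 \Big\| = p(1-p) \cdot \tfrac{\abs{T}}{n} \cdot \Big\| \sum\nolimits_j \vct{r}_j \vct{r}_j^\adj \Big\| \leq \frac{p \abs{T}}{n}.
$$
Matrix Bernstein (equivalently, the noncommutative Khintchine argument underlying \cite{Tro07:Conditioning-Random}) then yields a tail of the form
$$
\Prob{ \norm{\mtx{X}} \geq 1/4 } \leq 2 \abs{T} \exp\!\left( - \frac{\cnst{c}\, n}{\abs{T}} \right).
$$

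The hypothesis $\abs{T} \log n + \abs{\Omega} \leq \cnst{c} n / s$ simultaneously enforces $p \leq 1/4$ and $\cnst{c}\, n / \abs{T} \geq s \log n + \log \abs{T} + \bigO(1)$, so the tail above is at most $n^{-s}$ after adjusting constants. Combining the mean-term bound $p \leq 1/4$ with the deviation bound $\norm{\mtx{X}} < 1/4$ gives $\norm{\mtx{M}} < 1/2$ with probability at least $1 - n^{-s}$, which is the claim.

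The main obstacle is bookkeeping rather than genuine difficulty: I need to calibrate $\cnst{c}$ so that (i) the Bernoulli-to-uniform transfer, (ii) the mean contribution $p\Id$, and (iii) the Bernstein tail simultaneously fit under the prescribed threshold $0.5$ and the target failure probability $n^{-s}$. The asymmetric roles of $\abs{T}$ and $\abs{\Omega}$ in the hypothesis are exactly what matrix Bernstein produces: the logarithm attaches to the ambient dimension of the squared matrix, namely $\abs{T}$, while $\abs{\Omega}$ enters only through the matrix variance and the mean term.
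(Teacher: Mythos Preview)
Your proposal is correct and follows essentially the same route the paper indicates: the paper does not give a full proof but points to Rudelson's selection lemma (i.e., symmetrization plus noncommutative Khintchine) as the engine, and your matrix Bernstein argument is just the modern packaging of that same tool applied to the sum $\sum_{j\in\Omega} \vct{r}_j\vct{r}_j^\adj$. The only cosmetic difference is that you get the $\log n$ on $\abs{T}$ from the dimensional prefactor in Bernstein's tail, whereas the Rudelson route produces it as the $\sqrt{\log\abs{T}}$ factor in the Khintchine moment bound; either way the asymmetric hypothesis $\abs{T}\log n + \abs{\Omega} \leq \cnst{c}n/s$ is exactly what is needed.
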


The proof of this theorem uses Rudelson's selection lemma \cite[Sec.\ 2]{Rud99:Random-Vectors} in an essential way.  This lemma in turn hinges on the noncommutative Khintchine inquality \cite{L-P86:Inegalites-Khintchine,Buc01:Operator-Khintchine}.  For a related application of this approach, see \cite{CR07:Sparsity-Incoherence}.

Theorems \ref{thm:qrup} and \ref{thm:rdm-subdict} are interesting, but they do not predict that a far more striking phenomenon occurs.  A random collection of sines has the following property with high probability.  To this collection, one can add an \emph{arbitrary} set of spikes without sacrificing linear independence.

\begin{thm}
Fix a number $s \geq 1$, and assume $n \geq N(s)$.  Except with probability $n^{-s}$, a random set $\Omega$ whose cardinality $\abs{\Omega} \leq n/3$ has the following property.  For each set $T$ whose cardinality
$$
\abs{T} \leq \frac{ \cnst{c} n }{s \log^5 n},
$$
it holds that $\normsq{ \DFT_{\Omega T} } \leq 0.5$.
\end{thm}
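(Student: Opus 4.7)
The claim is a restricted isometry-type statement for the $\abs{\Omega}$-row random DFT submatrix. First I would reduce to a centered-deviation problem: since $\Expect[\DFT_{\Omega T}^\adj \DFT_{\Omega T}] = (\abs{\Omega}/n)\, \Id_{\abs{T}}$ and $\abs{\Omega}/n \leq 1/3$, the triangle inequality shows that $\normsq{\DFT_{\Omega T}} \leq 1/2$ is implied by
\[
\bigl\| \DFT_{\Omega T}^\adj \DFT_{\Omega T} - (\abs{\Omega}/n)\, \Id_{\abs{T}} \bigr\| \leq 1/6.
\]
It therefore suffices to bound this centered quantity uniformly over all $T$ with $\abs{T} \leq k$.

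For a fixed $T$, I would write $\DFT_{\Omega T}^\adj \DFT_{\Omega T} = (1/n)\sum_{\omega \in \Omega} \vct{u}_\omega \vct{u}_\omega^\adj$ with $\vct{u}_\omega(j) = \econst^{2\pi \iunit\, \omega j/n}$, and, after passing to a Bernoulli sampling model for $\Omega$, treat this as a sum of independent random rank-one terms. I would then invoke the Bourgain--Tzafriri extrapolation from Appendix \ref{app:extrap}: it upgrades a modest first-moment estimate on the operator norm of such a random sum into a sharp high-moment---and hence sub-Gaussian tail---inequality for the centered Gram matrix. This is exactly the step that replaces the intricate combinatorial moment argument of Cand\`es--Romberg--Tao advertised in the introduction. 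Finally, a union bound over all $\binom{n}{k} \leq (\econst n/k)^k$ subsets of size $k$, together with the requirement that the total failure probability remain below $n^{-s}$, yields the sparsity restriction $\abs{T} \leq \cnst{c} n/(s \log^5 n)$.

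The main obstacle is threading the logarithms so that exactly five powers of $\log n$, and no more, appear in the final sparsity bound. They arise from three essentially independent sources: one power from the combinatorial entropy $\log \binom{n}{k} \approx k \log(n/k)$; one from the target probability $n^{-s}$, since Markov's inequality must be applied at moment order $\sim s \log n$; and roughly three further powers from the quality of the BT moment bound for sums of rank-one operators built from unit-modulus vectors. A naive Frobenius-norm estimate is hopeless here, because
\[
\Expect \fnorm{\DFT_{\Omega T}^\adj \DFT_{\Omega T} - (\abs{\Omega}/n) \Id_{\abs{T}}} \sim \abs{T}/\sqrt{n},
\]
which already exceeds $1/6$ across the entire claimed range of $\abs{T}$. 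The Bourgain--Tzafriri extrapolation is precisely the device that captures the operator-norm cancellation responsible for closing this gap, without the combinatorial bookkeeping required by the moment method.
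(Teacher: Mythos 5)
Your approach has a genuine gap, and it is not the route the paper takes. The paper does \emph{not} prove this theorem from scratch: it quotes the restricted isometry property \eqref{eqn:rip} for a random row-submatrix of the DFT (the deep result of Cand\`es--Tao and Rudelson--Vershynin, which is the sole source of the five logarithms), and then gives a short deterministic deduction --- partition an arbitrary $T$ into blocks of size at most $\cnst{c}\abs{\Omega}/s\log^5 n$, apply \eqref{eqn:rip} to each block so that the factor $\abs{\Omega}$ cancels, and augment $\Omega$ with extra random coordinates when $\abs{\Omega}$ is too small for the blocks to be nonempty. The Bourgain--Tzafriri extrapolation, Proposition~\ref{prop:extrap}, is reserved for Theorems~\ref{thm:both-rand} and \ref{thm:both-rand-norm}, where \emph{both} sets are random; it bounds the \emph{uncentered} norm $\norm{\mtx{R}_\delta\mtx{A}\mtx{R}_\delta'}$ of a doubly random restriction by transferring moments from tiny submatrices using $\norm{\mtx{A}}\leq 1$. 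It does not produce a centered deviation bound for $\DFT_{\Omega T}^\adj\DFT_{\Omega T}-(\abs{\Omega}/n)\Id$ with $T$ fixed and only $\Omega$ random, and nothing in Appendix~\ref{app:extrap} yields the ``sub-Gaussian tail'' you invoke for that quantity; with one side held fixed the extrapolation mechanism does not even apply as stated.

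Even granting some fixed-$T$ tail bound, the ``fix $T$, bound, union over $\binom{n}{k}$ sets'' strategy cannot reach sparsity $k\sim n/\log^5 n$ for subsampled Fourier matrices. For a fixed $T$ of size $k$, the best exponential tail available for the event $\bigl\Vert\DFT_{\Omega T}^\adj\DFT_{\Omega T}-(\abs{\Omega}/n)\Id\bigr\Vert\geq 1/6$ (e.g.\ via a Bernstein-type bound for the sum of rank-one terms, whose individual norms are of order $k/n$) decays roughly like $\exp\{-\cnst{c}\, n/k\}=\exp\{-\cnst{c}' s\log^5 n\}$, which is only a fixed power of $n$; the entropy of the union is $\log\binom{n}{k}\approx k\log(n/k)$, which at $k=\cnst{c}n/(s\log^5 n)$ is polynomial in $n$. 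The union bound therefore fails by a polynomial-versus-polylogarithmic margin; this is exactly why uniformity over $T$ requires the chaining argument of Rudelson--Vershynin or the combinatorial moment computation of Cand\`es--Tao, and why the paper cites \eqref{eqn:rip} rather than rederiving it. Your bookkeeping of the five logarithms (entropy, Markov at order $s\log n$, three from the moment bound) is not where they come from --- they are inherited from the cited RIP result. To repair the argument, start from \eqref{eqn:rip} and supply the block-partition and $\Omega$-augmentation steps that decouple the admissible size of $T$ from $\abs{\Omega}$.
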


This result follows from the (deep) fact that a random row-submatrix of the DFT matrix satisfies the \term{restricted isometry property} (RIP) with high probability.  More precisely, a random set $\Omega$ with cardinality $\abs{\Omega}$ verifies the following condition, except with probability $n^{-s}$.
\begin{equation} \label{eqn:rip}
\frac{ \abs{\Omega} }{ 2n } \leq \normsq{ \DFT_{\Omega T} }
	\leq \frac{ 3 \abs{\Omega} }{ 2n }
\qquad\text{when
	$\abs{T} \leq \frac{\cnst{c} \abs{\Omega}}{s \log^5 n}$}.
\end{equation}
This result is adapted from \cite[Thm.\ 2.2 et seq.]{RV06:Sparse-Reconstruction}.

The bound \eqref{eqn:rip} was originally established by Cand{\`e}s and Tao \cite{CT06:Near-Optimal} for sets $T$ whose cardinality $\abs{T} \leq \cnst{c} \abs{\Omega} / s \log^6 n$.  Rudelson and Vershynin developed a simpler proof and reduced the exponent on the logarithm \cite{RV06:Sparse-Reconstruction}.  Experts believe that the correct exponent is just one or two, but this conjecture is presently out of reach.

\begin{proof}
Let $\cnst{c}$ be the constant in \eqref{eqn:rip}.  Abbreviate $m = \cnst{c} \abs{\Omega} / s \log^5 n$, and assume that $m \geq 1$ for now.  Draw a random set $\Omega$ with cardinality $\abs{\Omega}$, so relation \eqref{eqn:rip} holds except with probability $n^{-s}$.  Select an arbitrary set $T$ whose cardinality $\abs{T} \leq \cnst{c} n / 6 s \log^5 n$.  We may assume that $2\abs{T} / m \geq 1$ because $\abs{\Omega} \leq n/3$.  Partition $T$ into at most $2\abs{T} / m$ disjoint blocks, each containing no more than $m$ indices: $T = T_1 \cup T_2 \cup \dots \cup T_{2\abs{T} / m}$.  Apply \eqref{eqn:rip} to calculate that
$$
\normsq{ \DFT_{\Omega T} }
	\leq \frac{ 2\abs{T}}{m}
		\max\nolimits_k \normsq{ \DFT_{\Omega T_k} }
	\leq \abs{T} \cdot \frac{ 2s \log^5 n }{ \cnst{c} \abs{\Omega} }
		\cdot \frac{ 3 \abs{\Omega} }{ 2n }
	\leq \frac{1}{2}.
$$
Adjusting constants, we obtain the result when $\abs{\Omega}$ is not too smal.

In case $m < 1$, draw a random set $\Omega$ and then draw additional random coordinates to form a larger set $\Omega'$ for which $\cnst{c} \abs{\Omega'} / s \log^5 n \geq 1$ and $\abs{\Omega'} \leq n/3$.  This choice is possible because $n \geq N(s)$.  Apply the foregoing argument to $\Omega'$.  Since the spectral norm of a submatrix is not larger than the norm of the entire matrix, we have the bound
$\normsq{ \DFT_{\Omega T} } \leq \normsq{ \DFT_{\Omega' T} } \leq 0.5$ for each sufficiently small set $T$.
\end{proof}

\subsection{Bounds when both sets are random} \label{sec:both-rand}

To move into the regime where the number of spikes and sines is proportional to the dimension $n$, we need to randomize both sets.  The major goal of this article is to establish the following theorem.

\begin{thm} \label{thm:both-rand}
Fix a number $\eps > 0$, and assume that $n \geq N(\eps)$.  Suppose that
$$
\abs{T} + \abs{\Omega} \leq \cnst{c}(\eps) \cdot n.
$$
Let $T$ and $\Omega$ be random sets with cardinalities $\abs{T}$ and $\abs{\Omega}$.  Then
$$
\Prob{ \normsq{ \DFT_{\Omega T} } \geq 0.5 }
	\leq \exp\bigl\{ -n^{1/2 - \eps} \bigr\}.
$$
The constant $\cnst{c}(\eps) \geq \econst^{-\cnst{C}/\eps}$.
\end{thm}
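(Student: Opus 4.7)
The plan is to implement the Bourgain--Tzafriri extrapolation technique developed in Appendix \ref{app:extrap}, which is tailor-made to produce strong tail bounds on the norm of a random row-sampling of a matrix with orthonormal columns. Conditioning on $T$, I view $\Psi \defby \DFT_{\cdot T}$ as a fixed $n \times \abs{T}$ matrix with orthonormal columns and entries of modulus $n^{-1/2}$, so $\DFT_{\Omega T} = \Psi_\Omega$ is the restriction of $\Psi$ to the rows indexed by $\Omega$. Replacing the uniformly random set $\Omega$ by independent Bernoulli$(\abs{\Omega}/n)$ coordinate selectors $R = \diag(R_j)$ (Poissonization, standard), it suffices to bound $\norm{\Psi^\adj R \Psi - \delta \, \Id}$ where $\delta = \abs{\Omega}/n$, since $\Expect \Psi^\adj R \Psi = \delta \, \Id_{\abs{T}}$.

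A \emph{seed} estimate would come first: a first-moment bound of the shape $\Expect \norm{\Psi^\adj R \Psi - \delta\, \Id} \leq \eta(\delta)$ with $\eta(\delta)$ small. The natural tool is the noncommutative Khintchine inequality applied to the outer-product expansion
$$
\Psi^\adj R \Psi - \delta\, \Id = \sum\nolimits_{j=1}^{n} (R_j - \delta) \, \psi_j \psi_j^\adj,
$$
where $\psi_j^\adj$ denotes the $j$th row of $\Psi$; the uniform row-norm bound $\enorm{\psi_j} \leq \sqrt{\abs{T}/n}$ keeps each summand small and supplies the seed.

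The extrapolation lemma would then bootstrap this weak bound into a strong moment estimate. In broad terms, one decouples $R$ into many nearly-independent lower-density copies, applies the seed to each piece, and reassembles using a sub-multiplicative inequality; iterating amplifies the moment order at the cost of shrinking the admissible density by a constant factor each time, which is the source of the exponential dependence $\cnst{c}(\eps) \geq \econst^{-\cnst{C}/\eps}$. Pushing the iteration to moment order $k \sim n^{1/2-\eps}$ and applying Markov's inequality yields
$$
\Prob{\normsq{\Psi_\Omega} \geq 1/2} \leq \exp\bigl\{- n^{1/2-\eps}\bigr\},
$$
and since the estimate is uniform in $T$ (provided $\abs{T} \leq \cnst{c}(\eps) n$), it survives integration against the independent randomness in $T$.

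The main obstacle is engineering the extrapolation so that the geometric losses remain under control; this forces a careful symmetrization/decoupling analysis of the centered sum $\Psi^\adj R \Psi - \delta \, \Id$ and is where the real work lies. Everything else---Poissonization, the first-moment seed, conditioning on $T$, and the final Markov step---is standard, and the one-sided nature of the extrapolation (only $\Omega$ is actively randomized) is harmless because a uniform-in-$T$ estimate integrates trivially over the independent randomness in $T$.
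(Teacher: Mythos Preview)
Your proposal has a genuine gap: the one-sided randomization scheme cannot reach moment order $q\sim n^{1/2-\eps}$ when $\abs{T}$ is proportional to $n$.  You condition on $T$ and study $\Psi^\adj R\Psi$ with $\Psi=\DFT_{\cdot T}$, so the only ``small'' parameter in your seed is the row norm $\enorm{\psi_j}\leq\sqrt{\abs{T}/n}$.  When $\abs{T}=\cnst{c}(\eps)n$ this is a fixed constant $\sqrt{\cnst{c}(\eps)}$, not a vanishing quantity, and the Khintchine seed gives at best
\[
\bigl(\Expect\norm{\Psi^\adj R\Psi-\delta\Id}^q\bigr)^{1/q}\ \lesssim\ \sqrt{q}\cdot\sqrt{\abs{T}/n}\cdot\norm{\Psi^\adj R\Psi}^{1/2},
\]
which forces $q\lesssim n/\abs{T}$ before the right side exceeds a constant.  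With $\abs{T}\sim n$ this caps $q$ at a constant, so Markov yields only a polynomial tail---essentially the content of Theorem~\ref{thm:rdm-subdict}, where the best one-sided bound is $\abs{T}\log n+\abs{\Omega}\leq \cnst{c}n/s$ with failure $n^{-s}$.  No amount of iteration fixes this: every bootstrap step inherits the factor $\sqrt{\abs{T}/n}$.  Your claim that the resulting estimate is ``uniform in $T$ provided $\abs{T}\leq\cnst{c}(\eps)n$'' is therefore not supported, and indeed the paper notes after Theorem~\ref{thm:qrup} that logarithmic losses are intrinsic when one set is arbitrary.

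The paper's argument is structurally different and exploits the randomness in $T$ in an essential way.  It first reduces to the \emph{square} case $\mtx{R}_\delta\DFT\mtx{R}_\delta'$ (Lemma~\ref{lem:square-case}) with $\delta=\max\{\abs{T},\abs{\Omega}\}/n$, so that both sides are randomized at the same density.  The seed is then taken at $\varrho=1/n$ on \emph{both} sides (Lemma~\ref{lem:small-submatrix}), yielding $(\Expect\norm{\mtx{R}_\varrho\DFT\mtx{R}_\varrho'}^{2q})^{1/2q}\leq 2qn^{-1/2}$ by a direct Frobenius bound---the factor $n^{-1/2}$ comes precisely from the two-sided smallness and is what permits $q\sim n^{1/2-\eps}$.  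The extrapolation (Proposition~\ref{prop:extrap}) is not an iterative decoupling as you describe, but a single Chebyshev-type estimate: one observes that $s\mapsto\Expect\trace[(\mtx{R}_s\DFT\mtx{R}_s')^\adj(\mtx{R}_s\DFT\mtx{R}_s')]^q$ is a polynomial of degree $2q$, bounds its low-order coefficients using Markov's inequality for polynomial coefficients together with the seed at $s=\varrho$, and bounds its high-order coefficients using $\norm{\DFT}\leq 1$.  Splitting at index $\lfloor 2\lambda q\rfloor$ with $\lambda=\log 16/\log(1/\delta)$ gives the tail bound of Corollary~\ref{cor:useful-tail}, and the choice $q=\lfloor 0.5\,n^{1/2-\lambda}\rfloor$ finishes the proof.
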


Note that the probability bound here is superpolynomial, in contrast with the polynomial bounds of the previous section.  The estimate is essentially optimal.  Take $\eps > 0$, and suppose it were possible to obtain a bound of the form
$$
\Prob{ \norm{ \DFT_{\Omega T} } = 1 }
	\leq \exp\{ - n^{1/2 + \eps} \}
\qquad\text{where}\qquad
\abs{T} + \abs{\Omega} \leq 2n^{1/2}.
$$
According to Stirling's approximation, there are about $\exp\{ n^{1/2} \log n \}$ ways to select two sets satisfying the cardinality bound.  At the same time, the proportion of sets that are linearly dependent is at most $\exp\{-n^{1/2 + \eps} \}$.  Multiplying these two quantities, we find that no pair of sets meeting the cardinality bound is linearly dependent.  This claim contradicts the fact that the Dirac comb yields a linearly dependent collection of size $2n^{1/2}$.

\begin{rem} \label{rem:both-rand}
As we will see, Theorem~\ref{thm:both-rand} holds for every $n \times n$ matrix $\mtx{A}$ with constant spectral norm and uniformly bounded entries:
$$
\norm{\mtx{A}} \leq 1
\qquad\text{and}\qquad
\abs{a_{\omega t}} \leq n^{-1/2}
\quad\text{for $\omega, t = 1, 2, \dots, n$.}
$$
The proof does not rely on any special properties of the discrete Fourier transform.
\end{rem}



\subsection{Random matrix theory}

Finally, we consider an application of this approach to random matrix theory.  Note that each column of $\DFT_{\Omega T}$ has $\ell_2$ norm $\sqrt{\abs{\Omega}/n}$.  Therefore, it is appropriate to rescale the matrix by $\sqrt{n/\abs{\Omega}}$ so that its columns have unit norm.  Under this scaling, it is possible that the norm of the matrix explodes when $\abs{\Omega}$ is small in comparison with $n$.  The content of the next result is that this event is highly unlikely if the submatrix is drawn at random.

\begin{thm} \label{thm:both-rand-norm}
Fix a number $\delta \in (0, \cnst{c})$.  Suppose that $n \geq N(\delta)$ and that
$$
\abs{T} \leq \abs{\Omega} = \delta n.
$$
If $T$ and $\Omega$ are random sets with cardinalities $\abs{T}$ and $\abs{\Omega}$, then
$$
\Prob{ \sqrt{\frac{n}{\abs{\Omega}}} \norm{ \DFT_{\Omega T} } \geq 9 }
	\leq n^{-\cnst{C}}.
$$
\end{thm}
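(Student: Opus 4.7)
My plan is to reduce the claim to the Bourgain--Tzafriri extrapolation that underlies Theorem~\ref{thm:both-rand}, via a rescaling that turns the target into a constant-order bound on a random column subset of a tight frame. Set
$$\Psi \defby \sqrt{n/\abs{\Omega}}\;\DFT_{\Omega \cdot},$$
an $\abs{\Omega} \times n$ matrix whose entries have magnitude $1/\sqrt{\abs{\Omega}}$, whose columns have unit Euclidean norm, and which satisfies $\Psi \Psi^\adj = (n/\abs{\Omega})\,\Id_{\abs{\Omega}}$. The assertion $\sqrt{n/\abs{\Omega}}\,\norm{\DFT_{\Omega T}} \leq 9$ is identical to $\norm{\Psi_{\cdot T}} \leq 9$, so the task reduces to bounding the spectral norm of a random column subset (of size $\abs{T} \leq \abs{\Omega}$) of a tight frame in $\Cspace{\abs{\Omega}}$ with coherence $1/\sqrt{\abs{\Omega}}$.

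Next, I would condition on $\Omega$, which renders $\Psi$ deterministic and leaves only the randomness in $T$. This conditional problem has the same form as the hypotheses of Theorem~\ref{thm:both-rand} (cf.\ Remark~\ref{rem:both-rand}), but with $\abs{\Omega}$ in place of $n$ as the ambient dimension and with a rectangular frame rather than a square unitary. Because - as Remark~\ref{rem:both-rand} emphasizes - the argument uses only the entry bound and the normalization, not any feature of $\DFT$, the Bourgain--Tzafriri extrapolation of Appendix~\ref{app:extrap} applied at scale $\abs{\Omega}$ gives $\norm{\Psi_{\cdot T}}^2 \leq 81$ with conditional probability at least $1 - \abs{\Omega}^{-\cnst{C}} \geq 1 - n^{-\cnst{C}}$; unconditioning on $\Omega$ is immediate because the estimate holds for every $\Omega$. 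In the complementary regime where $\delta$ is bounded below by a fixed constant, Theorem~\ref{thm:both-rand} itself already forces $\norm{\DFT_{\Omega T}}^2 \leq 1/2$ and hence $\sqrt{n/\abs{\Omega}}\,\norm{\DFT_{\Omega T}} \leq 1/\sqrt{2\delta}$, which is below $9$ once the hypothesis constant $\cnst{c}$ is chosen small enough.

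The main obstacle is the scale-equivariance of the extrapolation: one must verify that the Appendix~\ref{app:extrap} argument really depends only on the ratio between the entry bound $1/\sqrt{\abs{\Omega}}$ and the normalization, on the Parseval-type relation $\Psi \Psi^\adj \propto \Id$, and on the smaller dimension $\abs{\Omega}$ - and not on having a square $n \times n$ matrix. If a direct reading of the appendix does not cover the rectangular frame, a short auxiliary argument embedding $\Psi$ in a square $\abs{\Omega} \times \abs{\Omega}$ matrix (say by passing to $\Psi_{\cdot T'}$ for an intermediate random $T' \supseteq T$ of size $\abs{\Omega}$) should close the gap. The polynomial failure probability $n^{-\cnst{C}}$ - weaker than the stretched-exponential bound of Theorem~\ref{thm:both-rand} - is exactly what one expects out of an extrapolation applied at the small scale $\abs{\Omega} = \delta n$.
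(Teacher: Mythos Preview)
Your rescaling loses exactly the ingredient that makes the extrapolation work. After you form $\Psi = \sqrt{n/\abs{\Omega}}\,\DFT_{\Omega\cdot}$ and condition on $\Omega$, the entry bound becomes $1/\sqrt{\abs{\Omega}}$ as you say, but the spectral norm becomes $\norm{\Psi} = \sqrt{n/\abs{\Omega}} = \delta^{-1/2}$, not a constant. Remark~\ref{rem:both-rand} requires both $\abs{a_{\omega t}} \leq n^{-1/2}$ \emph{and} $\norm{\mtx{A}} \leq 1$; the extrapolation in Appendix~\ref{app:extrap} explicitly uses $\norm{\mtx{A}} \leq 1$ to obtain the analytic coefficient bound~\eqref{eqn:coef-bd2}. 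With $\norm{\Psi} = \delta^{-1/2}$ that bound becomes $\abs{c_k} \leq n\econst^{4q}\delta^{-q}$, and the final inequality degrades by a factor of $\delta^{-1/2}$, exactly undoing your rescaling. Your proposed fix---passing first to an intermediate random $T'$ of size $\abs{\Omega}$ and working with the square matrix $\Psi_{\cdot T'}$---is circular: to invoke the appendix you would need $\norm{\Psi_{\cdot T'}} \leq \cnst{C}$, i.e.\ $\sqrt{n/\abs{\Omega}}\,\norm{\DFT_{\Omega T'}} \leq \cnst{C}$, which is precisely Theorem~\ref{thm:both-rand-norm} with $\abs{T} = \abs{\Omega}$.

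The paper's proof never conditions on $\Omega$ and never rescales. It stays at the full scale $n$, keeping both $\Omega$ and $T$ random, so that the input to Corollary~\ref{cor:useful-tail} is the DFT itself with $\norm{\DFT} = 1$. The factor $\delta^{1/2}$ is not obtained by rescaling but by choosing the extrapolation parameter $\lambda$ close to $1/2$: with $\lambda = \tfrac{1}{2} - 0.1/\log(1/\delta)$ and $q = \lfloor \cnst{C}\log n\rfloor$, the quantity $8\delta^{\lambda}\max\{1,\,2qn^{\lambda-1/2}\}$ in Corollary~\ref{cor:useful-tail} is at most $8.9\,\delta^{1/2}$, and setting $u = 90/89$ gives $\Prob{\norm{\DFT_{\Omega T}} \geq 9\delta^{1/2}} \leq n^{-\cnst{C}}$. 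The point is that the parameter $\lambda$ in Proposition~\ref{prop:extrap} already interpolates between the entry-size bound and the global spectral-norm bound, so one should tune $\lambda$ rather than rescale the matrix.
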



For $\delta$ in the range $[\cnst{c}, 1]$, it is evident that
$$
\sqrt{\frac{n}{\abs{\Omega}}} \norm{ \DFT_{\Omega T} }
	\leq \cnst{c}^{-1}.
$$
Therefore, we obtain a constant bound for the norm of a normalized random submatrix throughout the entire parameter range.

\begin{rem}
Theorem~\ref{thm:both-rand-norm} also holds for the class of matrices described in Remark~\ref{rem:both-rand}.
\end{rem}

\section{Norms of random submatrices} \label{sec:proofs}


In this section, we prove Theorem~\ref{thm:both-rand} and Theorem~\ref{thm:both-rand-norm}.  First, we describe some problem simplifications.  Then we provide a moment estimate for the norm of a very small random submatrix, and we present a device for extrapolating a moment estimate for the norm of a much larger random submatrix.  This moment estimate is used to prove a tail bound, which quickly leads to the two major results of the paper.

\subsection{Reductions} \label{sec:reductions}

Denote by $\mtx{P}_{\delta}$ a random $n \times n$ diagonal matrix where exactly $m = \lfloor \delta n \rfloor$ entries equal one and the rest equal zero.  This matrix can be seen as a projector onto a random set of $m$ coordinates.  With this notation, the restriction of a matrix $\mtx{A}$ to $m$ random rows and $m$ random columns can be expressed as $\mtx{P}_{\delta} \mtx{A} \mtx{P}_{\delta}'$, where the two projectors are statistically independent from each other.

\begin{lemma}[Square case] \label{lem:square-case}
Let $\mtx{A}$ be an $n \times n$ matrix.  Suppose that $T$ and $\Omega$ are random sets with cardinalities $\abs{T}$ and $\abs{\Omega}$.  If $\delta \geq \max\{\abs{T}, \abs{\Omega}\} / n$, then
$$
\Prob{ \norm{ \mtx{A}_{\Omega T} } \geq u } \leq
	\Prob{ \norm{ \mtx{P}_\delta \mtx{A} \mtx{P}_{\delta}' } \geq u }
\quad\text{for $u \geq 0$}.
$$
\end{lemma}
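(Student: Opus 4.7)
The plan is to prove the lemma by a coupling argument that exploits the monotonicity of the spectral norm under restriction to a submatrix: if $\Omega \subseteq \widetilde{\Omega}$ and $T \subseteq \widetilde{T}$, then $\mtx{A}_{\Omega T}$ is a submatrix of $\mtx{A}_{\widetilde{\Omega}\widetilde{T}}$, and hence $\norm{\mtx{A}_{\Omega T}} \leq \norm{\mtx{A}_{\widetilde{\Omega}\widetilde{T}}}$. All the work lies in producing a joint distribution on the pair $(\Omega, \widetilde{\Omega})$ (and similarly for $(T, \widetilde{T})$) whose marginals match the ones prescribed by the lemma statement and by the definition of $\mtx{P}_\delta$.

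Set $m = \lfloor \delta n \rfloor$. The hypothesis $\delta \geq \max\{\abs{T}, \abs{\Omega}\}/n$ gives $m \geq \max\{\abs{T}, \abs{\Omega}\}$ because $\abs{T}$ and $\abs{\Omega}$ are integers. I would first draw random sets $\widetilde{\Omega}$ and $\widetilde{T}$ independently and uniformly from the $m$-element subsets of $\{1, \dots, n\}$; the matrix $\mtx{P}_\delta \mtx{A} \mtx{P}_\delta'$ has the same nonzero singular values as $\mtx{A}_{\widetilde{\Omega}\widetilde{T}}$. Then, conditioned on $\widetilde{\Omega}$, draw $\Omega$ uniformly from the $\abs{\Omega}$-element subsets of $\widetilde{\Omega}$; conditioned on $\widetilde{T}$, draw $T$ uniformly from the $\abs{T}$-element subsets of $\widetilde{T}$, independently of everything else.

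The next step is to verify that under this coupling the marginal law of $\Omega$ really is uniform on $\coll{S}_{\abs{\Omega}}$, as required by the statement of the lemma. This is a standard two-stage sampling symmetry: for any fixed $S \in \coll{S}_{\abs{\Omega}}$,
$$
\Prob{\Omega = S} = \sum_{\widetilde{S} \supseteq S,\, \abs{\widetilde{S}} = m}
\Prob{\widetilde{\Omega} = \widetilde{S}} \cdot \binom{m}{\abs{\Omega}}^{-1}
= \binom{n-\abs{\Omega}}{m-\abs{\Omega}} \binom{n}{m}^{-1}\binom{m}{\abs{\Omega}}^{-1},
$$
and a quick rearrangement of binomial coefficients shows this equals $\binom{n}{\abs{\Omega}}^{-1}$, independent of $S$. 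The same argument works for $T$, and $\Omega$ and $T$ remain independent by construction.

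Having matched the marginals, the proof concludes in one line: since $\Omega \subseteq \widetilde{\Omega}$ and $T \subseteq \widetilde{T}$ almost surely, the containment of submatrices yields $\norm{\mtx{A}_{\Omega T}} \leq \norm{\mtx{A}_{\widetilde{\Omega}\widetilde{T}}} = \norm{\mtx{P}_\delta \mtx{A} \mtx{P}_\delta'}$ pointwise, so the event $\{\norm{\mtx{A}_{\Omega T}} \geq u\}$ is contained in the event $\{\norm{\mtx{P}_\delta \mtx{A} \mtx{P}_\delta'} \geq u\}$ and the probability inequality follows. There is no real obstacle: the only subtle point is the marginal-matching verification, which is a routine exchangeability computation. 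The utility of the lemma is that it lets subsequent arguments work with a cleaner object (a matrix sandwiched by two independent coordinate projectors of a common proportion $\delta$) whose norm dominates that of the rectangular random submatrix we actually care about.
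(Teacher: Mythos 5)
Your proof is correct and rests on the same idea as the paper's: couple the given random sets with larger random sets of cardinality $\lfloor \delta n \rfloor$ containing them, and invoke the monotonicity of the spectral norm under passing to a submatrix. The only cosmetic difference is that you build the coupling in one shot (draw the large sets uniformly, then subsample uniformly inside them, checking the marginals by a binomial identity), whereas the paper grows the small sets one element at a time and observes that the tail probability is weakly increasing in each cardinality.
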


\begin{proof}
It suffices to show that the probability is weakly increasing as the cardinality of one set increases.  Therefore, we focus on $\Omega$ and remove $T$ from the notation for clarity.  Let $\Omega$ be a random subset of cardinality $\abs{\Omega}$.  Conditional on $\Omega$, we may draw a uniformly random element $\omega$ from $\Omega^c$, and put $\Omega' = \Omega \cup \{ \omega \}$.  This $\Omega'$ is a uniformly random subset with cardinality $\abs{\Omega} + 1$.  We have
\begin{align*}
\Prob{ \norm{\mtx{A}_{\Omega}} \geq u }
	&= \Expect I( \norm{\mtx{A}_{\Omega}} \geq u ) \\
	&\leq \Expect I( \norm{\mtx{A}_{\Omega \cup \{\omega\}}} \geq u ) \\
	&= \Expect I( \norm{\mtx{A}_{\Omega'}} \geq u ) \\
	&= \Prob{ \norm{ \mtx{A}_{\Omega'}} \geq u }
\end{align*}
where we have written $I(E)$ for the indicator variable of an event.
The inequality follows because the spectral norm is weakly increasing when we pass to a larger matrix, and so we have the inclusion of events $\{ \Omega' : \norm{\mtx{A}_{\Omega}} \geq u \} \subset \{ \Omega' : \norm{\mtx{A}_{\Omega \cup \{\omega\}}} \geq u\}$. 
\end{proof}

It can be inconvenient to work with projectors of the form $\mtx{P}_\delta$ because their entries are dependent.  We would prefer a model where coordinates are selected independently.  To that end, denote by $\mtx{R}_{\delta}$ a random $n \times n$ diagonal matrix whose entries are independent 0--1 random variables of mean $\delta$.  This matrix can be seen as a projector onto a random set of coordinates with \emph{average} cardinality $\delta n$.  The following lemma establishes a relationship between the two types of coordinate projectors.  The argument is drawn from \cite[Sec.\ 3]{CR06:Quantitative-Robust}.

\begin{lemma}[Random coordinate models] \label{lem:rand-coords}
Fix a number $\delta$ in $[0, 1]$.  For every $n \times n$ matrix $\mtx{A}$,
$$
\Prob{ \norm{ \mtx{P}_{\delta} \mtx{A} }
	\geq u }
\leq 2 \Prob{ \norm{ \mtx{R}_{\delta} \mtx{A} }
	\geq u }
\quad\text{for $u \geq 0$.}
$$
In particular, 
$$
\Prob{ \norm{ \mtx{P}_{\delta} \mtx{A} \mtx{P}_{\delta}' }
	\geq u }
\leq 4 \Prob{ \norm{ \mtx{R}_{\delta} \mtx{A} \mtx{R}_{\delta}' }
	\geq u }
\quad\text{for $u \geq 0$.}
$$
\end{lemma}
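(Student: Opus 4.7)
The plan is to prove the first inequality directly by a conditioning argument comparing $\mtx{R}_{\delta}$ to $\mtx{P}_{\delta}$, and then derive the two-sided statement by applying the one-sided result twice (once in transposed form).

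For the first inequality, observe that $K = \trace(\mtx{R}_\delta)$ is a Binomial$(n,\delta)$ random variable, and that conditional on $K = k$, the matrix $\mtx{R}_\delta$ is distributed exactly as $\mtx{P}_{k/n}$, namely as a uniformly random diagonal 0--1 projector of rank $k$. Let $m = \lfloor \delta n \rfloor$. The monotonicity argument already used in the proof of Lemma~\ref{lem:square-case} shows that the map $k \mapsto \Prob{\norm{\mtx{P}_{k/n}\mtx{A}} \geq u}$ is weakly increasing, so for every $k \geq m$ we have $\Prob{\norm{\mtx{P}_{k/n}\mtx{A}} \geq u} \geq \Prob{\norm{\mtx{P}_{\delta}\mtx{A}} \geq u}$. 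Conditioning on $K$ then yields
$$
\Prob{\norm{\mtx{R}_\delta \mtx{A}} \geq u}
\; = \; \sum_{k=0}^{n} \Prob{K = k} \cdot \Prob{\norm{\mtx{P}_{k/n}\mtx{A}} \geq u}
\; \geq \; \Prob{K \geq m} \cdot \Prob{\norm{\mtx{P}_{\delta}\mtx{A}} \geq u}.
$$
The classical fact that $\lfloor n\delta\rfloor$ is a median of the Binomial$(n,\delta)$ distribution gives $\Prob{K \geq m} \geq 1/2$, which proves the first inequality.

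For the second inequality, apply the first inequality twice using independence of $\mtx{P}_\delta$ and $\mtx{P}_\delta'$. Condition first on $\mtx{P}_\delta$, set $\mtx{B} = \mtx{P}_\delta \mtx{A}$, and note that $\norm{\mtx{B}\mtx{P}_\delta'} = \norm{\mtx{P}_\delta' \mtx{B}^\adj}$. Applying the one-sided bound to the matrix $\mtx{B}^\adj$ and then taking expectations over $\mtx{P}_\delta$ gives
$$
\Prob{\norm{\mtx{P}_\delta \mtx{A} \mtx{P}_\delta'} \geq u}
\; \leq \; 2\,\Prob{\norm{\mtx{P}_\delta \mtx{A} \mtx{R}_\delta'} \geq u}.
$$
Now condition on $\mtx{R}_\delta'$ and apply the one-sided bound once more to the matrix $\mtx{A}\mtx{R}_\delta'$ on the left. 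This produces a further factor of $2$ and yields the stated factor of $4$.

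The only delicate step is the monotonicity-plus-median reduction: one has to recognize that $\mtx{R}_\delta$ conditioned on its rank is a uniform projector, and then exploit the fact that $\lfloor n \delta\rfloor$ lies in the support of the median of the binomial. Both facts are standard, but this is where the ``$2$'' in the bound comes from; everything else is bookkeeping with independence and transposition.
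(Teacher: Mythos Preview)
Your proof is correct and follows essentially the same route as the paper: condition $\mtx{R}_\delta$ on its rank, use the monotonicity of $k\mapsto\Prob{\norm{\mtx{P}_{k/n}\mtx{A}}\geq u}$, and invoke the binomial median to get $\Prob{K\geq \lfloor \delta n\rfloor}\geq 1/2$. Your version is slightly tidier in that working with $m=\lfloor\delta n\rfloor$ from the outset avoids the paper's separate integer/non-integer case split, and you spell out the two-sided inequality (transpose, condition, iterate) where the paper leaves it implicit.
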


\begin{proof}
Given a coordinate projector $\mtx{R}$, denote by $\sigma(\mtx{R})$ the set of coordinates onto which it projects.  For typographical felicity, we use $\#\sigma(\mtx{R})$ to indicate the cardinality of this set.

First, suppose that $\delta n$ is an integer.  For every $u \geq 0$, we may calculate that
\begin{align*}
\Prob{ \norm{ \mtx{R}_{\delta} \mtx{A} } \geq u }
	&\geq \sum\nolimits_{j = \delta n}^n
		\Prob{ \norm{ \mtx{R}_{\delta} \mtx{A} } \geq u
		\ | \ \# \sigma( \mtx{R}_{\delta} ) = j }
		\cdot \Prob{ \# \sigma(\mtx{R}_{\delta} ) = j } \\
	&\geq \Prob{ \norm{ \mtx{R}_\delta \mtx{A} } \geq u
		\ | \ \# \sigma( \mtx{R}_\delta ) = \delta n } \cdot
		\sum\nolimits_{j = \delta n}^n
			\Prob{ \# \sigma(\mtx{R}_\delta ) = j } \\
	&\geq \frac{1}{2} \Prob{ \norm{ \mtx{P}_\delta \mtx{A} } \geq u }.
\end{align*}
The second inequality holds because the spectral norm of a submatrix is smaller than the spectral norm of the matrix.  The third inequality relies on the fact \cite[Thm.\ 3.2]{JS68:Monotone-Convergence} that the medians of the binomial distribution $\textsc{binomial}( \delta, n )$ lie between $\delta n - 1$ and $\delta n$.

In case $\delta n$ is not integral, the monotonicity of the spectral norm yields that
$$
\Prob{ \norm{ \mtx{R}_{\delta} \mtx{A} } \geq u }
	\geq \Prob{ \norm{ \mtx{R}_{\lfloor \delta n \rfloor / n} \mtx{A}}
		\geq u }.
$$
Since $\mtx{P}_{\lfloor \delta n \rfloor / n} = \mtx{P}_{\delta}$, this point completes the argument.
\end{proof}

\subsection{Small submatrices}

We focus on matrices with uniformly bounded entries.  The first step in the argument is an elementary estimate on the norm of a random submatrix with expected order one.  In this regime, the bound on the matrix entries determines the norm of the submatrix; the signs of the entries do not play a role.  The proof shows that most of the variation in the norm actually derives from the fluctuation in the order of the submatrix.

\begin{lemma}[Small Submatrices] \label{lem:small-submatrix}
Let $\mtx{A}$ be an $n \times n$ matrix whose entries are bounded in magnitude by $n^{-1/2}$.  Abbreviate $\varrho = 1/n$.  When $q \geq 2\log n \geq \econst$,
$$
\left( \Expect \norm{ \mtx{R}_{\varrho} \mtx{A} \mtx{R}_{\varrho}' }^{2q} \right)^{1/2q}
	\leq 2q n^{-1/2}.
$$
\end{lemma}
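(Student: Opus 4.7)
My plan is to dominate the spectral norm by the Frobenius norm---which is essentially tight in the very-low-rank regime $\varrho = 1/n$---and then to reduce the expectation to a moment bound for a single Binomial$(n, 1/n)$ variable. Write $\Omega = \sigma(\mtx{R}_{\varrho})$ and $T = \sigma(\mtx{R}_{\varrho}')$ for the random row and column supports. Since the surviving entries of $\mtx{R}_{\varrho}\mtx{A}\mtx{R}_{\varrho}'$ lie in $\Omega \times T$ and are bounded in magnitude by $n^{-1/2}$, the Frobenius bound gives
$$
\norm{\mtx{R}_{\varrho}\mtx{A}\mtx{R}_{\varrho}'}^2 \leq \fnorm{\mtx{R}_{\varrho}\mtx{A}\mtx{R}_{\varrho}'}^2 \leq n^{-1}\abs{\Omega}\abs{T}.
$$
Raising to the $q$-th power and using that $\mtx{R}_{\varrho}$ and $\mtx{R}_{\varrho}'$ are independent,
$$
\Expect \norm{\mtx{R}_{\varrho}\mtx{A}\mtx{R}_{\varrho}'}^{2q} \leq n^{-q}\,(\Expect X^q)^2, \qquad X \sim \mathrm{Binomial}(n,1/n).
$$
At this point all matrix-specific information has been used, and the problem collapses to estimating a single binomial moment.

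For that moment I would expand ordinary powers into falling factorials using Stirling numbers of the second kind, $X^q = \sum_{j=1}^q S(q,j)\,X(X-1)\cdots(X-j+1)$. The factorial moments of $\mathrm{Binomial}(n,1/n)$ are $(n)_j / n^j \leq 1$, so $\Expect X^q \leq \sum_j S(q,j) = B_q$, the $q$-th Bell number. A very crude count---each set partition of $[q]$ is determined by some function $[q] \to [q]$---gives $B_q \leq q^q$, hence $(\Expect X^q)^{1/q} \leq q$. Substituting back,
$$
\bigl(\Expect \norm{\mtx{R}_{\varrho}\mtx{A}\mtx{R}_{\varrho}'}^{2q}\bigr)^{1/(2q)} \leq n^{-1/2}\cdot q \leq 2q\, n^{-1/2},
$$
with room to spare.

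The main obstacle is conceptual rather than technical: one has to be comfortable that the Frobenius reduction loses nothing here. That is precisely the content of the paragraph preceding the lemma: when both the expected row count and expected column count are $O(1)$, sign cancellation in $\mtx{A}$ has nothing to cancel against, and the fluctuation in the norm is driven almost entirely by the fluctuation in the support size. Once that is granted, neither step involves a delicate estimate. In particular, the hypothesis $q \geq 2\log n \geq \econst$ appears to play no role in the proof of this lemma itself, and presumably enters only so that the resulting moment estimate is of the right shape to feed into the subsequent extrapolation and tail bound.
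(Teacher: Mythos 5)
Your proof is correct, and its core is the same as the paper's: dominate the spectral norm by the Frobenius norm, so that all of the randomness collapses onto the sizes $\abs{\Omega}=\#\sigma(\mtx{R}_{\varrho})$ and $\abs{T}=\#\sigma(\mtx{R}_{\varrho}')$. Where you diverge is in how the resulting moment $\Expect\bigl[(\abs{\Omega}\abs{T})^q\bigr]$ is estimated. The paper conditions on the exact order $j\times k$ of the submatrix, bounds $\Probe{\Sigma_{jk}}$ by $(\econst/j)^j(\econst/k)^k$, and then bounds the sum $\sum_{k=1}^n k^q(\econst/k)^k$ by $n$ times its largest term (located at $k\log k=q$); this crude step costs a factor $n^{1/q}$, and the hypothesis $q\geq 2\log n$ is exactly what is used to make $n^{1/q}\leq\econst^{1/2}<2$. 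Your route instead expands $X^q$ in falling factorials, uses that the factorial moments of $\mathrm{Binomial}(n,1/n)$ are at most one, and bounds $\Expect X^q$ by the Bell number $B_q\leq q^q$; this is essentially the Poisson-moment computation, it produces no $n$-dependent factor, and it yields the slightly better bound $qn^{-1/2}$ valid for all $q$. So your closing remark is accurate for \emph{your} argument but not for the paper's: in the paper the condition $q\geq 2\log n$ is doing real work inside this lemma (absorbing $n^{1/q}$), not merely setting up the later extrapolation. Both arguments leave the same roughly $\log q$ of slack noted in the paper's remark, since $B_q^{1/q}$ is of order $q/\log q$.
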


\begin{proof}
By homogeneity, we may rescale $\mtx{A}$ so that its entries are bounded in magnitude by one.  Define the event $\Sigma_{jk}$ where the random submatrix has order $j \times k$.
$$
\Sigma_{jk} = \{ \#\sigma(\mtx{R}_{\varrho}) = j \text{ and }
	\#\sigma(\mtx{R}_{\varrho}') = k \}.
$$
On this event, the norm of the submatrix can be bounded as
$$
\norm{ \mtx{R}_\varrho \mtx{A} \mtx{R}_{\varrho}' }
	\leq \fnorm{ \mtx{R}_\varrho \mtx{A} \mtx{R}_{\varrho}' }
	\leq \sqrt{jk}.
$$
Using elementary inequalities, we may estimate the probability that this event occurs.
$$
\Probe{\Sigma_{jk}}
	= {n \choose j}{n \choose k} \varrho^{j+k} (1-\varrho)^{2n - (j+k)}
	\leq \left( \frac{\econst n}{j} \right)^{j}
		\left( \frac{\econst n}{k} \right)^{k} n^{-(j+k)}
	= (\econst/j)^j \cdot (\econst/k)^k.
$$
With this information at hand, the rest of the proof follows from some easy calculations:
\begin{align*}
\Expect \norm{ \mtx{R}_{\varrho} \mtx{A} \mtx{R}_{\varrho}' }^{2q}
	&= \sum\nolimits_{j,k=1}^n \Expect\left[
		\norm{ \mtx{R}_{\varrho} \mtx{A} \mtx{R}_{\varrho}' }^{2q} \ | \
		\Sigma_{jk} \right]
		\cdot \Probe{ \Sigma_{jk} } \\
	&\leq \sum\nolimits_{j,k=1}^n (jk)^q \cdot (\econst/j)^j \cdot (\econst/k)^k \\
	&= \left[ \sum\nolimits_{k=1}^n k^q \cdot (\econst/k)^k \right]^2.
\end{align*}
A short exercise in differential calculus shows that the maximum term in the sum occurs when $k \log k = q$.  Write $k_{\star}$ for the solution to this equation, and note that $k_{\star} \leq q$.  Bounding all the terms by the maximum, we find
$$
\sum\nolimits_{k=1}^n k^q \cdot (\econst/k)^k
	\leq n \cdot \exp\{ q \log k_{\star} - k_{\star} \log k_{\star} + k_{\star} \} 
	\leq n \cdot \exp\{ q \log k_{\star} \}
	\leq n \cdot q^q.
$$
Combining the last two inequalities, we reach
$$
\left( \Expect \norm{ \mtx{R}_{\varrho} \mtx{A} \mtx{R}_{\varrho}' }^{2q} \right)^{1/2q}
	\leq \left( n^2 \cdot q^{2q} \right)^{1/2q}
	= n^{1/q} \cdot q.
$$
When $q \geq 2\log n$, the first term is less than two.
\end{proof}

\begin{rem}
This argument delivers a moment estimate that is roughly a factor of $\log q$ smaller than the one stated.  This fact can be used to sharpen the major results slightly at a cost we prefer to avoid.
\end{rem}

\subsection{Extrapolation}

The key technique in the proof is an extrapolation of the moments of the norm of a large random submatrix from the moments of a smaller random submatrix.  Without additional information, extrapolation must be fruitless because the signs of matrix entries play a critical role in determining the spectral norm.  It turns out that we can fold in information about the signs by incorporating a bound on the spectral norm of the matrix.  The proof, which we provide in Appendix~\ref{app:extrap}, ultimately depends on the minimax property of the Chebyshev polynomials.  The method is essentially the same as the one Bourgain and Tzafriri develop to prove Proposition~2.7 in \cite{BT91:Problem-Kadison-Singer}.  See also \cite[Sec.\ 7]{Tro08:Random-Paving}.

\begin{prop}
\label{prop:extrap}
Suppose that $\mtx{A}$ is an $n \times n$ matrix with $\norm{\mtx{A}} \leq 1$.  Let $q$ be an integer that satisfies $13 \log n \leq q \leq n/2$.  Write $\varrho = 1/n$, and choose $\delta$ in the range $[1/n, 1]$.   For each $\lambda \in (0, 1)$, it holds that
$$
\left( \Expect \norm{ \mtx{R}_\delta \mtx{A}\mtx{R}_\delta' }^{2q} \right)^{1/2q}
\leq 8 \delta^{\lambda} \max\left\{1, n^{\lambda}
	\left( \Expect \norm{ \mtx{R}_\varrho \mtx{A}
		\mtx{R}_\varrho' }^{2q} \right)^{1/2q} \right\}.
$$
\end{prop}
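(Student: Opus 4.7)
My approach would be to convert the spectral-norm moment into a trace moment that is a polynomial in $\delta$, and then extrapolate using a Chebyshev-polynomial interpolation in the spirit of Bourgain--Tzafriri. The key observation driving the plan is that we have two very different pieces of information about the same quantity: a trivial uniform bound coming from $\norm{\mtx{A}}\leq 1$, and a much sharper bound at the single point $\delta=\varrho$.

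First I would replace the spectral norm by a trace: $\norm{\mtx{B}}^{2q} \leq \trace((\mtx{B}\mtx{B}^*)^q)$. With $\mtx{B}=\mtx{R}_\delta\mtx{A}\mtx{R}_\delta'$, using $(\mtx{R}_\delta')^2=\mtx{R}_\delta'$ and cyclicity of the trace, matters reduce to estimating
$$
g(\delta) \defby \Expect\trace\bigl((\mtx{A}\mtx{R}_\delta'\mtx{A}^*\mtx{R}_\delta)^q\bigr).
$$
I would then observe that $g$ is a polynomial of degree at most $2q$ in $\delta$, vanishing at $0$. Indeed, expanding the trace as a sum over index sequences, each term carries an expectation $\Expect \prod_i \xi_{j_i} \prod_l \xi'_{k_l}$; since $\xi^m=\xi$ for 0--1 variables, this equals $\delta^{a+b}$ where $1\leq a,b \leq q$ are the numbers of distinct indices appearing on the two sides.

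Next, I would pin down two pointwise controls on $g$. (i) Since $\norm{\mtx{A}}\leq 1$ and coordinate projectors have norm at most one, the matrix inside the trace has all eigenvalues in $[0,1]$, so $g(x) \leq n$ for every $x \in [0,1]$. (ii) Since $g(\varrho) \leq n \cdot \Expect\norm{\mtx{R}_\varrho\mtx{A}\mtx{R}_\varrho'}^{2q}$, the small-submatrix input of the proposition gives a sharp estimate at $\varrho$. With both endpoints in hand, I would invoke a Chebyshev-polynomial interpolation following the template in Bourgain--Tzafriri's proof of \cite[Prop.~2.7]{BT91:Problem-Kadison-Singer} (cf.\ \cite[Sec.~7]{Tro08:Random-Paving}): construct an auxiliary polynomial, built from a scaled Chebyshev polynomial of degree $2q$, that expresses $g(\delta)$ as a combination of values of $g$ on $[0,1]$ and of $g(\varrho)$. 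The minimax property of Chebyshev polynomials then delivers a bound of the shape
$$
g(\delta) \leq C^{2q}\,\delta^{2q\lambda}\,\max\Bigl\{1,\; n^{2q\lambda} \Expect\norm{\mtx{R}_\varrho\mtx{A}\mtx{R}_\varrho'}^{2q}\Bigr\}.
$$
Taking $2q$-th roots and absorbing the leading constant into the factor $8$ produces the stated inequality.

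The main obstacle is the last step: the Chebyshev construction must be tuned so that the exponent of $\delta$ comes out as $\lambda$, with $\lambda$ free in $(0,1)$, and so that the weight on the small-submatrix moment is $n^{2q\lambda}$ rather than $n^{2q}$. I expect the hypothesis $q \geq 13\log n$ to be what absorbs the factor $n$ coming from $\trace \leq n \cdot \norm{\cdot}$ (since $n^{1/2q} \leq \econst^{1/26}$), and the hypothesis $q \leq n/2$ to keep the degree of the interpolating polynomial small enough that the Chebyshev minimax estimates remain effective. Assembling these ingredients carefully is the technical heart of the argument.
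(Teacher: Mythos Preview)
Your plan is essentially the paper's own argument: pass to the trace polynomial $p(\delta)=\Expect\trace(\mtx{A}^{\adj}\mtx{R}_\delta\mtx{A}\mtx{R}_\delta')^q$, use the two controls $p\leq nF(\varrho)$ on $[0,\varrho]$ and $p\leq n$ on $[0,1]$, and extrapolate via Chebyshev/Markov. The one point you leave open---how the free parameter $\lambda$ enters---is handled not by building an interpolating polynomial but by bounding the \emph{coefficients} of $p$ twice: Markov's inequality (coefficients of a degree-$r$ polynomial are at most $\econst^r\sup_{[-1,1]}|p|$) applied after the substitution $s=\varrho t^2$ gives $|c_k|\varrho^k\leq n\econst^{4q}F(\varrho)$, while the same inequality on $[0,1]$ gives $|c_k|\leq n\econst^{4q}$; one then writes $F(\delta)\leq\sum|c_k|\delta^k$, splits at $K=\lfloor 2\lambda q\rfloor$, and uses the first bound for $k\leq K$ and the second for $k>K$. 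One small correction: the hypothesis $q\leq n/2$ is not needed to make the Chebyshev step effective (the degree is $4q$ regardless of $n$); it is purely numerical, ensuring $(2qn)^{1/2q}\econst^2<8$ once combined with $q\geq 13\log n$.
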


Although the statement is a little complicated, we require the full power of this estimate.  As usual, the parameter $q$ is the moment that we seek.  The proposition extrapolates from a matrix of expected order $1$ up to a matrix of expected order $\delta n$.  The parameter $\lambda$ is a tuning knob that controls how much of the estimate is determined by the spectral norm of the full matrix and how much is determined by the norm bound for small submatrices.  Indeed, the first member of the maximum reflects the spectral norm bound $\norm{\mtx{A}} \leq 1$.

\subsection{A tail bound}

We are now prepared to develop a tail bound for the random norm $\norm{ \mtx{R}_\delta \mtx{A} \mtx{R}_\delta' }$.



\begin{lemma}[Tail Bound] \label{lem:tail-bound}
Let $\mtx{A}$ be an $n \times n$ matrix for which
$$
\norm{ \mtx{A} } \leq 1
\qquad\text{and}\qquad
\abs{ a_{jk} } \leq n^{-1/2}
\quad\text{for $j,k = 1,2, \dots, n$}.
$$
Choose $\delta$ from $[1/n, 1]$ and an integer $q$ that satisfies $13 \log n \leq q \leq n/2$.  For each $\lambda \in (0, 1)$, it holds that
$$
\Prob{ \norm{ \mtx{R}_\delta \mtx{A} \mtx{R}_\delta' } \geq
	8 \delta^\lambda \max\bigl\{1, 2qn^{\lambda -1/2} \bigr\}
	\cdot u }
\leq u^{-2q}
\quad\text{for $u \geq 1$.}
$$
\end{lemma}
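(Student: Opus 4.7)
The plan is to apply Markov's inequality to the $2q$-th power of $\norm{\mtx{R}_\delta \mtx{A} \mtx{R}_\delta'}$, after controlling that moment by chaining Lemma~\ref{lem:small-submatrix} and Proposition~\ref{prop:extrap}. The three ingredients line up so tightly that the proof is essentially a one-line calculation; the quantity $M = 8 \delta^\lambda \max\{1, 2q n^{\lambda - 1/2}\}$ in the statement is precisely the moment bound that drops out.

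First I would check the hypotheses of Lemma~\ref{lem:small-submatrix}. The assumption $q \geq 13 \log n$ dominates the requirement $q \geq 2 \log n$, and for $n$ large enough (already forced by $13 \log n \leq n/2$) we also have $2\log n \geq \econst$. The lemma then gives
$$
\bigl(\Expect \norm{\mtx{R}_\varrho \mtx{A} \mtx{R}_\varrho'}^{2q}\bigr)^{1/2q} \leq 2q \, n^{-1/2}.
$$

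Next I would plug this into Proposition~\ref{prop:extrap}, whose hypotheses $\norm{\mtx{A}} \leq 1$, $\delta \in [1/n, 1]$, $\lambda \in (0,1)$, and $13\log n \leq q \leq n/2$ are exactly those of the present lemma. Multiplying the small-submatrix bound by $n^\lambda$ converts $2q\, n^{-1/2}$ into $2q\, n^{\lambda - 1/2}$, and the extrapolation delivers
$$
\bigl(\Expect \norm{\mtx{R}_\delta \mtx{A} \mtx{R}_\delta'}^{2q}\bigr)^{1/2q} \leq 8 \delta^\lambda \max\bigl\{1,\, 2q\, n^{\lambda - 1/2}\bigr\} = M.
$$

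Finally, Markov's inequality applied to the nonnegative random variable $\norm{\mtx{R}_\delta \mtx{A} \mtx{R}_\delta'}^{2q}$ yields, for any $u \geq 1$,
$$
\Prob{ \norm{\mtx{R}_\delta \mtx{A} \mtx{R}_\delta'} \geq M u } \leq \frac{\Expect \norm{\mtx{R}_\delta \mtx{A} \mtx{R}_\delta'}^{2q}}{(Mu)^{2q}} \leq u^{-2q}.
$$
There is no real obstacle: the lemma is simply where the Bourgain--Tzafriri extrapolation (Proposition~\ref{prop:extrap}) and the moment estimate for very small submatrices (Lemma~\ref{lem:small-submatrix}) are assembled into a tail bound. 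The only care required is matching the $n^\lambda$ factor so that the extrapolated moment bound agrees exactly with the scale $M$ in the final probability estimate.
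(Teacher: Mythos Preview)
Your proposal is correct and follows essentially the same approach as the paper: apply Markov's inequality to the $2q$-th moment, and bound that moment by combining Lemma~\ref{lem:small-submatrix} (the estimate at $\varrho = 1/n$) with Proposition~\ref{prop:extrap} (the extrapolation to $\delta$). The paper presents the same three steps in slightly different order but with identical content.
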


\begin{proof}[Proof of Lemma \ref{lem:tail-bound}]
Choose an integer $q$ in the range $[13 \log n, n/2]$.  Markov's inequality allows that
$$
\Prob{ \norm{ \mtx{R}_\delta \mtx{A} \mtx{R}_\delta' } \geq
	\left(\Expect \norm{
		\mtx{R}_\delta \mtx{A} \mtx{R}_\delta' }^{2q}\right)^{1/2q}
	\cdot u } \leq u^{-2q}.
$$
Therefore, we may establish the result by obtaining a moment estimate.  This estimate is a direct consequence of Lemma~\ref{lem:small-submatrix} and Proposition~\ref{prop:extrap}:
$$
\left(\Expect \norm{\mtx{R}_\delta \mtx{A} \mtx{R}_\delta' }^{2q}\right)^{1/2q}
	\leq 8\delta^{\lambda} \max\left\{1, n^{\lambda} \cdot 2qn^{-1/2}\right\}.
$$
Combine the two bounds to complete the argument.
\end{proof}

%

The two major results of this paper, Theorem~\ref{thm:both-rand} and Theorem~ \ref{thm:both-rand-norm}, both follow from a simple corollary of Lemma \ref{lem:tail-bound}.

\begin{cor} \label{cor:useful-tail}
Suppose that $T$ and $\Omega$ are random sets with cardinalities $\abs{T}$ and $\abs{\Omega}$.  Assume $\delta \geq \max\{\abs{T}, \abs{\Omega}\} / n$.  For each integer $q$ that satisfies $13 \log n \leq q \leq n/2$ and for $\lambda \in [0, 1]$, it holds that
$$
\Prob{ \norm{ \DFT_{\Omega T} } \geq
	8 \delta^\lambda \max\bigl\{1,
	2qn^{\lambda-1/2} \bigr\}
	 \cdot u }
\leq 4 u^{-2q}
\quad\text{for $u \geq 1$.}
$$
\end{cor}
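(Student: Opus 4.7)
The plan is to chain together the two reduction lemmas from Section~\ref{sec:reductions} with the tail bound of Lemma~\ref{lem:tail-bound}, applied to $\mtx{A} = \DFT$. The matrix $\DFT$ satisfies the hypotheses of Lemma~\ref{lem:tail-bound}: it is unitary, so $\norm{\DFT} \leq 1$, and each of its entries has magnitude exactly $n^{-1/2}$.

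First, since $\delta \geq \max\{\abs{T}, \abs{\Omega}\}/n$, Lemma~\ref{lem:square-case} yields
$$
\Prob{ \norm{ \DFT_{\Omega T} } \geq v }
\leq \Prob{ \norm{ \mtx{P}_\delta \DFT \mtx{P}_\delta' } \geq v }
\quad\text{for $v \geq 0$.}
$$
Next, Lemma~\ref{lem:rand-coords} lets me replace the hard-cardinality projectors $\mtx{P}_\delta, \mtx{P}_\delta'$ by the independent-coordinate projectors $\mtx{R}_\delta, \mtx{R}_\delta'$ at the cost of a factor $4$:
$$
\Prob{ \norm{ \mtx{P}_\delta \DFT \mtx{P}_\delta' } \geq v }
\leq 4 \Prob{ \norm{ \mtx{R}_\delta \DFT \mtx{R}_\delta' } \geq v }
\quad\text{for $v \geq 0$.}
$$

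Finally, I would set $v = 8\delta^\lambda \max\{1, 2q n^{\lambda - 1/2}\} \cdot u$ and invoke Lemma~\ref{lem:tail-bound} with $\mtx{A} = \DFT$, for the chosen values of $\delta$, $q$, and $\lambda$. That lemma gives
$$
\Prob{ \norm{ \mtx{R}_\delta \DFT \mtx{R}_\delta' } \geq v } \leq u^{-2q}
\quad\text{for $u \geq 1$,}
$$
and combining the three displays produces the desired bound $4 u^{-2q}$. The only minor point worth checking is that the endpoint $\lambda \in [0,1]$ in the corollary is compatible with the open range $\lambda \in (0,1)$ of Proposition~\ref{prop:extrap} underlying Lemma~\ref{lem:tail-bound}; the endpoints $\lambda = 0$ and $\lambda = 1$ can be handled by continuity or by adjusting the constants, so this is not a real obstacle.

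There is essentially no obstacle here beyond bookkeeping: the corollary is a packaged application of Lemma~\ref{lem:tail-bound} to the specific matrix $\DFT$, with the two reductions absorbing the difference between sampling sets of fixed size (as in the statement about $\DFT_{\Omega T}$) and independent Bernoulli sampling (as in the statement of Lemma~\ref{lem:tail-bound}). All the analytical work has been done upstream in Lemma~\ref{lem:small-submatrix} and Proposition~\ref{prop:extrap}.
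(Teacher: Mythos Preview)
Your proposal is correct and matches the paper's own proof essentially verbatim: the paper likewise sets $\mtx{A} = \DFT$, invokes Lemma~\ref{lem:square-case} and Lemma~\ref{lem:rand-coords} to pass to the Bernoulli model (picking up the factor~$4$), and then applies Lemma~\ref{lem:tail-bound}. Your remark about the endpoints $\lambda \in \{0,1\}$ is a reasonable side observation that the paper does not explicitly address.
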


\begin{proof}
Consider the matrix $\mtx{A} = \DFT$.  Perform the reductions from Section \ref{sec:reductions}, Lemma~\ref{lem:square-case} and Lemma~\ref{lem:rand-coords}.  Then apply the tail bound, Lemma \ref{lem:tail-bound}.
\end{proof}

\subsection{Proof of Theorem \ref{thm:both-rand}}

The content of Theorem \ref{thm:both-rand} is to provide a bound on $\delta$ which ensures that $\norm{\DFT_{\Omega T}}$ is somewhat less than one with extremely high probability.  To that end, we want to make $\lambda$ close to zero and $q$ large.  The following selections accomplish this goal:
$$
\lambda = \frac{\log 16}{\log (1/\delta)}
\qquad\text{and}\qquad
q = \lfloor 0.5 n^{1/2 - \lambda} \rfloor.
$$
Note that we can make $\lambda$ as small as we like by taking $\delta$ sufficiently small.  For any value of $\lambda < 0.5$, the number $q$ satisfies the requirements of Corollary \ref{cor:useful-tail} as soon as $n$ is sufficiently large.


Now, the bound of Corollary \ref{cor:useful-tail} results in
$$
\Prob{ \norm{ \DFT_{\Omega T} } \geq 0.5 u }
	\leq 4 u^{-2q}.
$$
For $u = \sqrt{2}$, we see that
$$
\Prob{ \normsq{ \DFT_{\Omega T} } \geq 0.5 }
	\leq 4 \cdot 2^{-q}.
$$
If follows that, for any assignable $\eps > 0$, we can make
$$
\Prob{ \normsq{ \DFT_{\Omega T} } \geq 0.5 }
	\leq \exp\bigl\{ - n^{1/2 - \eps} \bigr\}
$$
provided that $\delta \leq \econst^{-\cnst{C}/\eps} = \cnst{c}(\eps)$ and that $n \geq N(\eps)$.

\subsection{Proof of Theorem \ref{thm:both-rand-norm}}

To establish Theorem \ref{thm:both-rand-norm}, we must make the parameter $\lambda$ as close to $0.5$ as possible.  Choose
$$
\lambda = \frac{1}{2} - \frac{0.1}{\log(1/\delta)}
\qquad\text{and}\qquad
q = \lfloor \cnst{C} \log n \rfloor.
$$
where $\cnst{C}$ is a large constant.  These choices are acceptable once $\delta$ is sufficiently small and $n$ is sufficiently large.

Corollary \ref{cor:useful-tail} delivers
$$
\Prob{ \norm{ \DFT_{\Omega T} } \geq 8.9 \delta^{1/2} u }
	\leq 4u^{- \cnst{C} \log n}.
$$
For $u = 90 / 89$, we reach
$$
\Prob{ \norm{ \DFT_{\Omega T} } \geq 9 \delta^{1/2} }
	\leq n^{- \cnst{C}},
$$
adjusting constants as necessary.  Finally, we transfer the factor $\delta^{1/2}$ to the other side of the inequality and set $\delta = \abs{\Omega} / n$ to complete the proof.

\section{Numerical Experiments} \label{sec:exper}

The theorems of this paper provide gross information about the norm of a random submatrix of the DFT.  To complement these results, we performed some numerical experiments to give a more detailed empirical view.

The first set of experiments concerns random square submatrices of a DFT matrix of size $n$, where we varied the parameter $n$ over several orders of magnitude.  Given a value of $\delta \in (0, 0.5)$, we formed one hundred random submatrices with dimensions $\delta n \times \delta n$ and computed the average spectral norm of these matrices.  We did not plot data when $\delta \in (0.5, 1)$ because the norm of a random submatrix equals one.

Figure \ref{fig:square-unscaled} shows the raw data for this first experiment.  As $n$ grows, one can see that the norm tends toward an apparent limit: $2 \sqrt{\delta (1 - \delta)}$.  In Figure \ref{fig:square-scaled}, we re-scale each matrix by $\delta^{-1/2}$ so its columns have unit norm and then compute the average spectral norm.  More elaborate behavior is visible in this plot:
\begin{itemize}
\item	For $\delta = 1/n$, the norm of a random submatrix is identically equal to one.

\item	For $\delta = 2/n$, the norm tends toward $1 + 2^{-1/2} = 1.7071\dots$, which can be verified by a relatively simple analytic computation.

\item	The maximum value of the norm appears to occur at $\delta = 2 / \sqrt{n}$.

\item	The apparent limit of the scaled norm is $2 \sqrt{1 - \delta}$, in agreement with the first figure.
\end{itemize}
These phenomena are intriguing, and it would be valuable to understand them in more detail.  Unfortunately, the methods of this paper are not refined enough to provide an explanation.
 
\begin{figure}
\includegraphics[height=0.4\textheight]{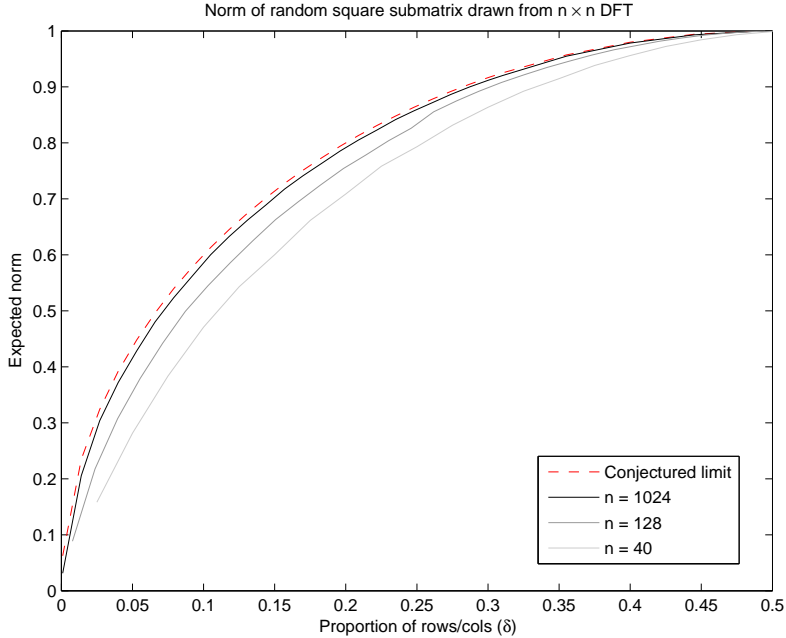}
\caption{Sample average of the norm of a random $\delta n \times \delta n$ submatrix drawn from the $n \times n$ DFT.} \label{fig:square-unscaled}
\end{figure}

\begin{figure}
\includegraphics[height=0.4\textheight]{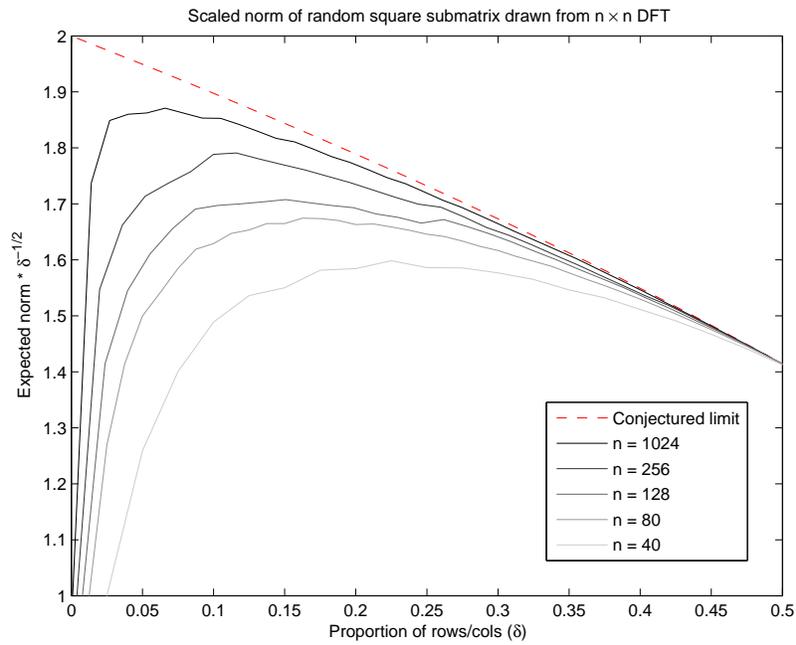}
\caption{Sample average of the norm of a random $\delta n \times \delta n$ submatrix drawn from the $n \times n$ DFT and re-scaled by $\delta^{-1/2}$.} \label{fig:square-scaled}
\end{figure}

In the second set of experiments, we studied the norm of a random rectangular submatrix of the $128 \times 128$ DFT matrix.  We varied the proportion $\delta_T$ of columns and the proportion $\delta_\Omega$ of rows in the range $(0, 1)$.  For each pair $(\delta_T, \delta_\Omega)$, we drew 100 random submatrices and computed the average norm.  Figure \ref{fig:rect-unscaled} shows the raw data.  The apparent trend is that
$$
\Expect \norm{ \mtx{P}_{\delta_\Omega} \DFT \mtx{P}_{\delta_T}' }
	= 2 \sqrt{\delta (1 - \delta)}
\qquad\text{where}\qquad
\delta = \frac{\abs{T} + \abs{\Omega}}{2}.
$$
Figure \ref{fig:rect-scaled} shows the same data, rescaled by $\max\{\abs{T}, \abs{\Omega}\}^{-1/2}$.  As in the square case, this plot reveals a variety of interesting phenomena that are worth attention.

\begin{figure}
\includegraphics[height=0.4\textheight]{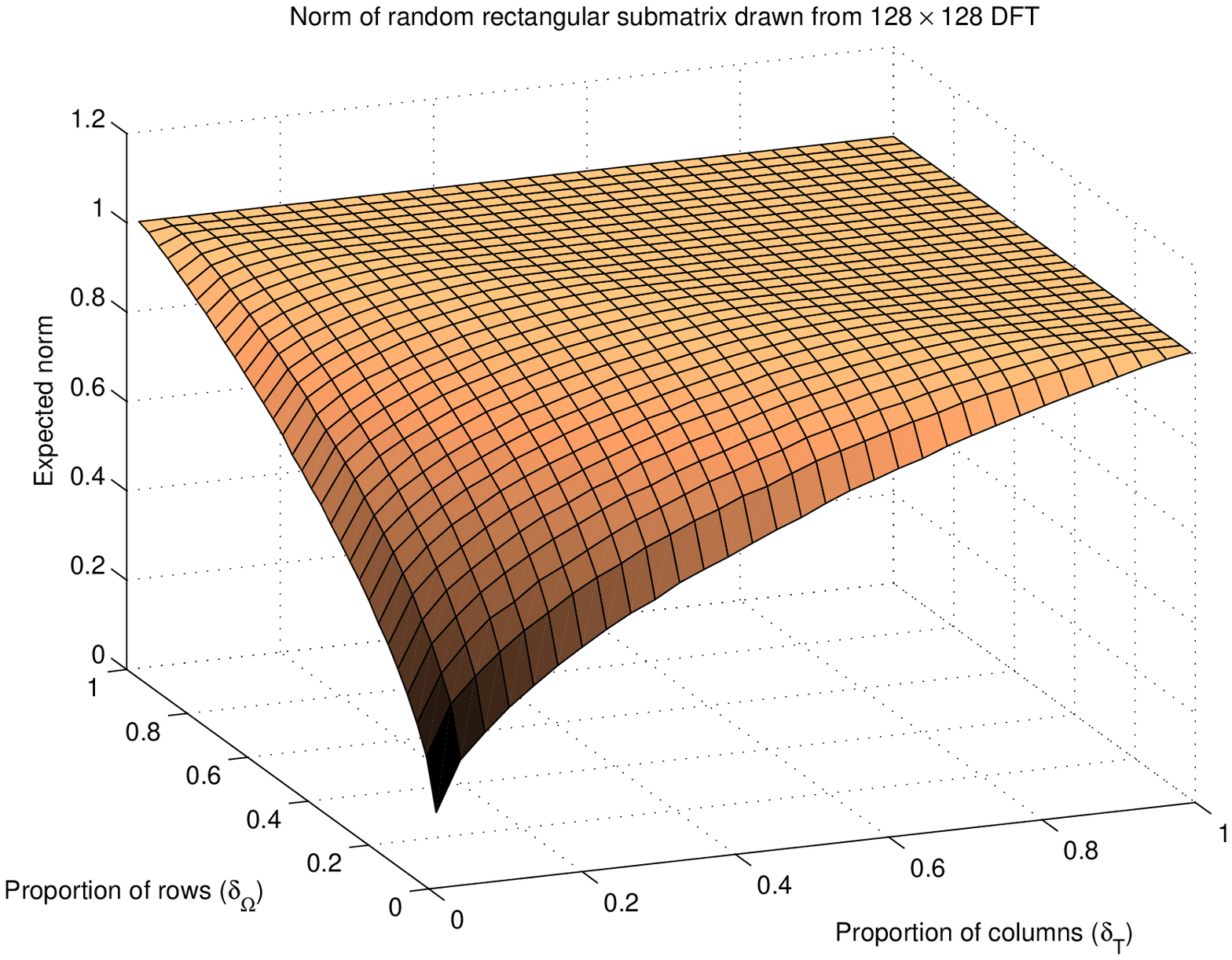}
\caption{Sample average of the norm of a random $\delta_\Omega n \times \delta_T n$ submatrix drawn from the $128 \times 128$ DFT matrix.} \label{fig:rect-unscaled}
\end{figure}

\begin{figure}
\includegraphics[height=0.4\textheight]{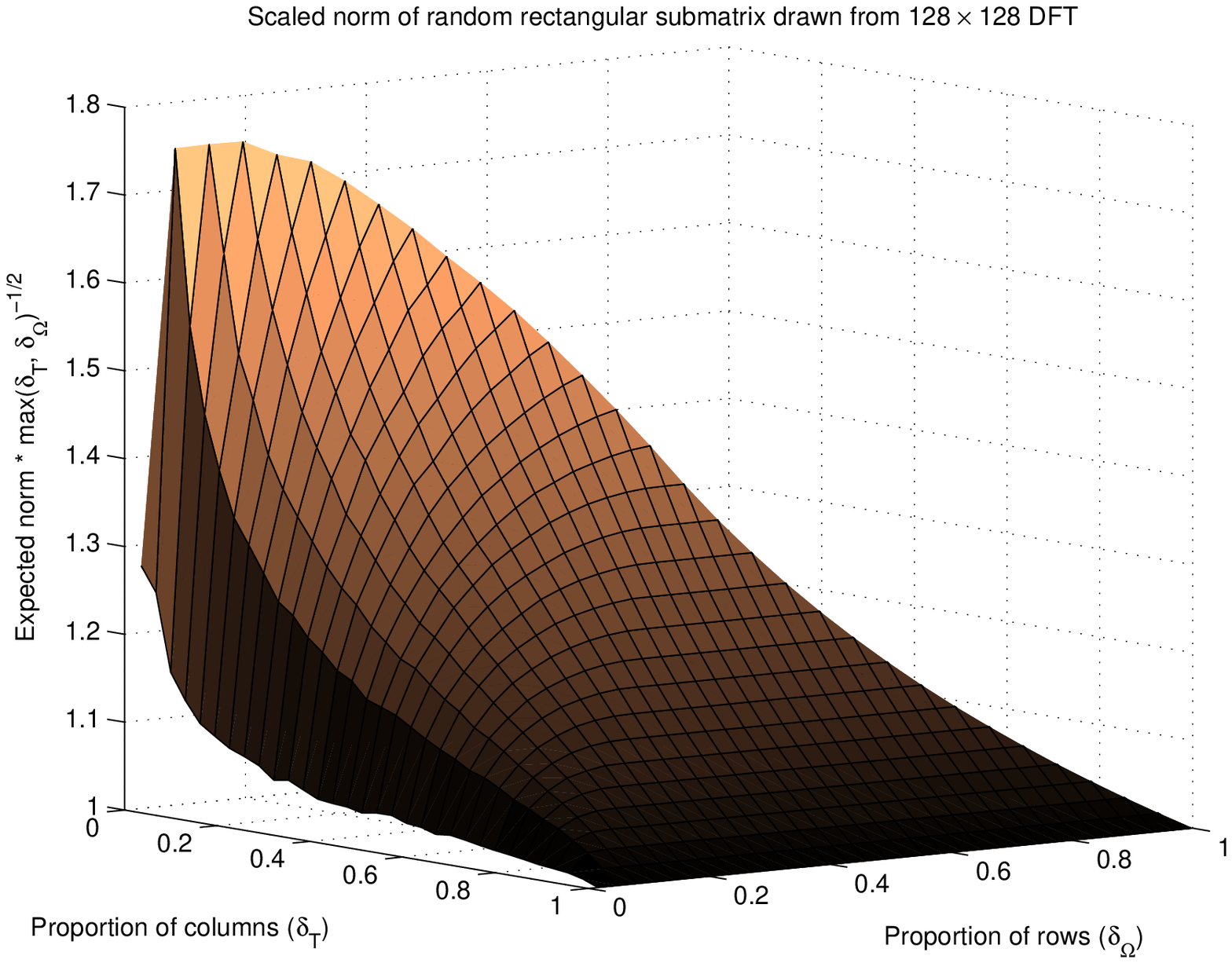}
\caption{Sample average of the norm of a random $\delta_\Omega n \times \delta_T n$ submatrix drawn from the $128 \times 128$ DFT matrix and rescaled by $\max\{\abs{T},\abs{\Omega}\}^{-1/2}$.} \label{fig:rect-scaled}
\end{figure}

\section{Further Research Directions} \label{sec:future}


The present research suggests several directions for future exploration.

\begin{enumerate}
\item	It may be possible to improve the constants in Proposition \ref{prop:extrap} using a variation of the current approach.  Instead of using the Chebyshev polynomial to estimate the coefficients of the polynomial that arises in the proof, one might use the nonnegative polynomial of least deviation from zero on the interval $[0, 1]$.  The paper \cite{BK85:Polynomials-Fixed} is relevant in this connection: its authors identify the nonnegative polynomials with least deviation from zero with respect to $L_p$ norms for $p < \infty$.  The $p = \infty$ case appears to be open, and uniqueness may be an issue.

\item	Instead of reducing the problem to the square case, it would be valuable to understand the rectangular case directly.  Again, it may be possible to adapt Proposition \ref{prop:extrap} to handle this situation.  This approach would probably require the bivariate polynomials of least deviation from zero identified by Sloss \cite{Slo65:Chebyshev-Approximation}.

\item	A harder problem is to determine the limiting behavior of the expected norm of a random submatrix as the dimension grows and the proportion of rows and columns remains fixed.  We frame the following conjecture.

\begin{conj}[Quartercircle Law]
A random square submatrix of the $n \times n$ DFT satisfies
$$
\Expect \norm{ \mtx{P}_\delta \DFT \mtx{P}_\delta' }
	\leq 2 \sqrt{ \delta (1 - \delta) }.
$$
The inequality becomes an equality as $n \to \infty$.
\end{conj}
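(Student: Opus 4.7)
The plan is to approach the Quartercircle Conjecture via free probability and the moment method. First, observe that
$$
\normsq{\mtx{P}_\delta \DFT \mtx{P}_\delta'}
= \norm{\mtx{P}_\delta \mtx{Q} \mtx{P}_\delta}
\quad\text{where}\quad
\mtx{Q} = \DFT \mtx{P}_\delta' \DFT^\adj,
$$
and $\mtx{Q}$ is the orthogonal projection (in the standard basis) onto the span of the Fourier vectors indexed by the random set defining $\mtx{P}_\delta'$. The problem thus reduces to bounding the top eigenvalue of the product of two independent random rank-$\delta n$ projections: one diagonal in the standard basis, and one diagonal in the Fourier basis.

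Since the DFT is a complex Hadamard matrix, the standard and Fourier bases are maximally incoherent, and one expects $\mtx{P}_\delta$ and $\mtx{Q}$ to behave as an asymptotically free pair of projections of trace $\delta$ as $n \to \infty$. Under this heuristic, the limiting spectral distribution of $\mtx{P}_\delta \mtx{Q} \mtx{P}_\delta$ is the free multiplicative convolution of $(1-\delta)\delta_0 + \delta\delta_1$ with itself. A short $S$-transform calculation gives $\chi(y) = y(1+y)/(y+\delta)^2$, whose first branch singularity as an inverse function lies at $z^{\star} = 1/(4\delta(1-\delta))$; hence for $\delta \leq 1/2$ the absolutely continuous part of the limiting measure is supported on $[0, 4\delta(1-\delta)]$. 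The upper edge matches the conjectured squared-norm $(2\sqrt{\delta(1-\delta)})^2$, and at $\delta = 1/2$ one recovers the arcsine law on $[0, 1]$, consistent with the experimental value of $1$.

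To convert this heuristic into a proof, I would develop the moment method for
$$
\Expect \trace\bigl[(\mtx{P}_\delta \DFT \mtx{P}_\delta' \DFT^\adj)^q\bigr],
$$
expanding the trace into a sum over $2q$-tuples of row and column indices weighted by DFT entries forming a closed walk, with the random coordinate selections contributing hypergeometric factors. I would then organize the sum by the pairing structure on indices in the style of a genus expansion: the planar (non-crossing) pairings should saturate $[4\delta(1-\delta)]^q$ up to polynomial prefactors, while non-planar pairings carry extra suppression from unmatched Fourier phases on odd cycles. Setting $q = \lfloor \cnst{C} \log n \rfloor$ and applying Markov's inequality should then yield $\Expect \norm{\mtx{P}_\delta \DFT \mtx{P}_\delta'} \leq 2\sqrt{\delta(1-\delta)}(1 + o(1))$.

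The main obstacles are twofold. First, the genus expansion must be executed rigorously for the deterministic Fourier matrix rather than for i.i.d.\ Gaussian or Haar-unitary ensembles; showing that non-planar pairings are genuinely subleading amounts to quantitative asymptotic freeness of the two bases, which in turn rests on delicate cancellations among exponential sums modulo $n$. Second, passing from convergence of moments to a sharp bound on the expected operator norm demands edge-level control comparable to Tracy--Widom-type results, which is well beyond what the Bourgain--Tzafriri extrapolation device of Proposition~\ref{prop:extrap} can deliver. That extrapolation is inherently loose by a multiplicative constant that cannot be driven to $2$ by tuning alone, so a genuinely new ingredient---perhaps a direct resolvent analysis or a sharper concentration inequality tailored to the DFT---will be required to obtain the sharp constant.
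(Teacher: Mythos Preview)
The statement you are attempting to prove is labeled a \emph{Conjecture} in the paper, and the paper contains no proof of it. In fact, the paper explicitly says that ``at present, however, these conjectures are out of reach.'' So there is no proof in the paper against which to compare your proposal.

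Your write-up is not a proof but a research plan, and you say so yourself: you identify the two main obstacles (rigorous asymptotic freeness of the standard and Fourier bases via control of non-planar terms, and edge-level control of the top eigenvalue rather than just bulk convergence) and concede that neither is resolved. Both concerns are well placed. The free-probability heuristic correctly predicts the edge $4\delta(1-\delta)$ for the free multiplicative convolution of two Bernoulli$(\delta)$ projections, but turning this into a statement about the DFT requires quantitative asymptotic freeness of $\mtx{P}_\delta$ and $\DFT\mtx{P}_\delta'\DFT^\adj$, which is not established in the paper and is genuinely delicate because it depends on exponential-sum cancellations specific to $\mathbb{Z}/n\mathbb{Z}$. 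Moreover, even granting weak convergence of the empirical spectral distribution, the conjecture concerns the \emph{expected operator norm}, and moment convergence at order $q = O(\log n)$ does not by itself pin down the edge to the sharp constant $2\sqrt{\delta(1-\delta)}$; you would need either moments growing with $n$ at a carefully tuned rate together with tight combinatorial bounds, or a separate ``no outliers'' argument. Your final paragraph correctly notes that the Bourgain--Tzafriri extrapolation used in the paper loses a constant factor and cannot deliver the sharp $2$, so a new ingredient is required. In short: the heuristic is sound and the obstacles you flag are the real ones, but what you have written is an outline of open problems rather than a proof, which is consistent with the paper's own assessment that the conjecture is open.
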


One can develop a similar statement about random rectangular submatrices.  At present, however, these conjectures are out of reach.

\item	Finally, one might study the behavior of the lower singular value of a (suitably normalized) random submatrix drawn from the DFT.  There are some results available when one set, say $T$, is fixed \cite{CRT06:Robust-Uncertainty}.  It is possible that the behavior will be better when both sets are random.  The present methods do not seem to provide much information about this problem.
\end{enumerate}

\section*{Acknowledgments}


One of the anonymous referees provided a wealth of useful advice that substantially improved the quality of this work.  In particular, the referee described a version of Lemma~\ref{lem:small-submatrix} and demonstrated that it offers a simpler route to the main results than the argument in earlier drafts of this paper.

\appendix

\section{Chebyshev Extrapolation} \label{app:extrap}

One of the major tools in the proof of Theorem \ref{thm:both-rand} is
Proposition \ref{prop:extrap}.  This result extrapolates the moments of the norm of a large random submatrix drawn from a fixed matrix, given information about a small random submatrix.  An important idea behind the result is to fold information about the spectral norm of the matrix into the estimate.  The extrapolation technique is due to Bourgain and Tzafriri  \cite{BT91:Problem-Kadison-Singer}.  We require a variant of their result, so we repeat the argument in its entirety.  The complete statement of the result follows.

\begin{prop}
\label{prop:extrap-app}
Suppose that $\mtx{A}$ is an $n \times n$ matrix with $\norm{\mtx{A}} \leq 1$.  Let $q$ be an integer that satisfies $13 \log n \leq q \leq n/2$.  Choose parameters $\varrho \in (0, 1)$ and $\delta \in [\varrho, 1]$.   For each $\lambda \in [0, 1]$, it holds that
$$
\left( \Expect \norm{ \mtx{R}_\delta \mtx{A}\mtx{R}_\delta' }^{2q} \right)^{1/2q}
\leq 8 \delta^{\lambda} \max\left\{1, \varrho^{-\lambda}
	\left( \Expect \norm{ \mtx{R}_\varrho \mtx{A}
		\mtx{R}_\varrho' }^{2q} \right)^{1/2q} \right\}.
$$
The same result holds if we replace $\mtx{R}_{\delta}'$ by $\mtx{R}_{\delta}$ and replace $\mtx{R}_{\varrho}'$ by $\mtx{R}_{\varrho}$.
\end{prop}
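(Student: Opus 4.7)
The plan is to reduce the $2q$-th moment of $\norm{\mtx{R}_\delta \mtx{A} \mtx{R}_\delta'}$ to an evaluation of a polynomial in $\delta$ of degree at most $2q$, and then to extrapolate that polynomial from $\varrho$ up to $\delta$ by means of the minimax property of the Chebyshev polynomials.

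First I would pass from spectral norm to trace via $\norm{M}^{2q} \leq \trace((MM^*)^q)$ with $M = \mtx{R}_\delta \mtx{A} \mtx{R}_\delta'$, then use cyclic invariance of the trace together with $\mtx{R}_\delta^2 = \mtx{R}_\delta$ to rewrite the right-hand side as $\trace\bigl((\mtx{A}\mtx{R}_\delta'\mtx{A}^*\mtx{R}_\delta)^q\bigr)$. Expanding this trace as a sum over $2q$-tuples of indices yields products of entries of $\mtx{A}$ multiplied by products of Bernoulli diagonal entries of $\mtx{R}_\delta,\mtx{R}_\delta'$; using $\xi^2=\xi$ and independence shows that
$$G(\delta) \defby \Expect\trace\bigl((\mtx{A}\mtx{R}_\delta'\mtx{A}^*\mtx{R}_\delta)^q\bigr)$$
is a polynomial of degree at most $2q$ in $\delta$ with $G(0)=0$. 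The hypothesis $\norm{\mtx{A}} \leq 1$, together with the fact that $\mtx{R}_\delta \mtx{A} \mtx{R}_\delta'$ is a submatrix of $\mtx{A}$, gives the uniform bound $0 \leq G(\delta) \leq n$ on $[0,1]$. The variant in which $\mtx{R}_\delta'$ is replaced by $\mtx{R}_\delta$ follows by identifying the two projectors throughout this reduction.

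Next comes the extrapolation itself: for a polynomial $G$ of degree $\leq 2q$ with $G(0)=0$ and $0 \leq G \leq n$ on $[0,1]$, I must bound $G(\delta)$ by an expression of the form $(8\delta^\lambda)^{2q}\max\bigl(1,\varrho^{-2q\lambda} G(\varrho)\bigr)$. Following Bourgain--Tzafriri, the strategy is to introduce an auxiliary polynomial of degree $2q$ built as $x^k$ times the shifted Chebyshev polynomial of degree $2q-k$ on $[\varrho,1]$, with the integer $k \approx 2q(1-\lambda)$ chosen to match the target exponent. The factor $x^k$ forces a zero of order $k$ at the origin (which converts the uniform bound $G \leq n$ into a gain of $\delta^k$); the Chebyshev factor is controlled on $[\varrho,1]$ by its minimax property; and at the point $\delta$ the auxiliary polynomial inflates the comparison $G(\delta)/G(\varrho)$ by roughly $(\delta/\varrho)^{2q-k}$. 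Balancing these two contributions, taking $2q$-th roots, and absorbing $n^{1/2q} \leq \econst^{1/26}$ (a consequence of $q \geq 13 \log n$) into the universal constant produces the claimed bound.

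The main obstacle will be the Chebyshev step. The difficulty is threefold: producing a single polynomial identity that cleanly separates the two regimes of the $\max$ (the ``spectral norm'' term and the ``small moment'' term); matching the continuous parameter $\lambda \in [0,1]$ to an integer order of vanishing at the origin, since the factor $x^k$ forces a discrete choice; and tracking constants so that a universal factor of $8$ suffices in the end. The minimax characterization of the Chebyshev polynomial on the shifted interval $[\varrho,1]$ is essential here, and this is precisely the step where the argument makes full use of the hypothesis $\norm{\mtx{A}} \leq 1$, since that hypothesis is what supplies the uniform bound $G \leq n$ needed to anchor the Chebyshev comparison.
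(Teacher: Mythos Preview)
Your reduction to the trace polynomial $G(\delta)$ of degree at most $2q$ with $G(0)=0$ and $0\leq G(\delta)\leq n$ on $[0,1]$ is exactly what the paper does (the paper calls it $p(s)$ and records the sandwich $F(s)\leq p(s)\leq nF(s)$).

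Where you diverge is in the extrapolation step. The paper does not build an auxiliary polynomial $x^{k}T_{2q-k}$ on a shifted interval. Instead it invokes V.~A.~Markov's coefficient inequality (Proposition~\ref{prop:markov}): if $p(t)=\sum c_k t^k$ has degree $r$, then $|c_k|\leq \econst^{r}\max_{|t|\leq 1}|p(t)|$. The trick is to apply this bound twice after two changes of variable. Setting $s=\varrho t^{2}$ and using $|p(\varrho t^{2})|\leq nF(\varrho)$ on $|t|\leq 1$ gives $|c_k|\varrho^{k}\leq n\econst^{4q}F(\varrho)$; setting $s=t^{2}$ and using $F\leq 1$ gives $|c_k|\leq n\econst^{4q}$. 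Then one simply writes $p(\delta)=\sum c_k\delta^{k}$, splits at $K=\lfloor 2\lambda q\rfloor$, and applies the first bound to the low-degree terms (producing the $(\delta/\varrho)^{K}F(\varrho)$ contribution) and the second to the high-degree terms (producing the $\delta^{K+1}$ contribution). Taking $2q$-th roots and absorbing $(2qn)^{1/2q}\econst^{2}\leq 8$ under $13\log n\leq q\leq n/2$ finishes.

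Both routes ultimately rest on the Chebyshev minimax property, but the paper's version sidesteps all three of the difficulties you flag: no auxiliary polynomial needs to be constructed, the integer rounding $K=\lfloor 2\lambda q\rfloor$ is handled painlessly because $\delta/\varrho\geq 1$ and $\delta\leq 1$ make the relevant sums monotone in $k$, and the separation into the two branches of the $\max$ falls out automatically from the sum split rather than from a polynomial identity. Your auxiliary-polynomial sketch could presumably be made to work, but as written it is vaguer at exactly the point where the paper's argument is most concrete.
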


V.~A.~Markov observed that the coefficients of an arbitrary polynomial can be bounded in terms of the coefficients of a Chebyshev polynomial because Chebyshev polynomials are the unique polynomials of least deviation from zero on the unit interval.  See \cite[Sec.\ 2.9]{Tim63:Theory-Approximation} for more details.

\begin{prop}[Markov] \label{prop:markov}
Let $p(t) = \sum_{k = 0}^r c_k t^k$.  The coefficients of the polynomial $p$ satisfy the inequality
$$
\abs{c_k} \leq \frac{r^k}{k!} \max_{\abs{t} \leq 1} \abs{ p(t) }
	\leq \econst^r \max_{\abs{t} \leq 1} \abs{ p(t) }.
$$
for each $k = 0, 1, \dots, r$.
\end{prop}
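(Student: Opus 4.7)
The second inequality follows immediately from the Taylor series $\econst^r = \sum_{j \geq 0} r^j/j!$, every term of which dominates $r^k/k!$. The substance of the proposition is the first inequality, which by homogeneity reduces (after rescaling $p$ by $M^{-1}$, where $M := \max_{|t|\le 1}|p(t)|$) to the statement: if $\|p\|_{[-1,1]} \leq 1$ and $\deg p \leq r$, then $|c_k| \leq r^k/k!$ for every $0 \leq k \leq r$.

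The plan is to carry out the program hinted at in the text preceding the statement. First, I would establish that the \emph{Chebyshev polynomials are extremal for each individual coefficient}: among polynomials of degree at most $r$ with sup-norm at most $1$ on $[-1,1]$, the quantity $|c_k|$ is maximized by a Chebyshev polynomial, namely $T_r$ when $r-k$ is even and $T_{r-1}$ when $r-k$ is odd (since the matching coefficient of $T_r$ vanishes by parity). The proof is the classical exchange argument attached to the uniqueness of Chebyshev polynomials as the polynomials of least deviation from zero: were some $p$ to exceed the Chebyshev bound on $|c_k|$, subtracting a suitable multiple of the extremal Chebyshev polynomial would produce a polynomial of lower effective degree whose appropriately normalized form violates the minimal-deviation property of $T_r$ (or $T_{r-1}$) on $[-1,1]$.

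Second, I would bound the Chebyshev coefficients explicitly. The classical representation
\[
T_r(t) = \frac{r}{2}\sum_{j=0}^{\lfloor r/2 \rfloor} \frac{(-1)^j(r-j-1)!}{j!\,(r-2j)!}\,(2t)^{r-2j}
\]
yields $|c_{r-2j}(T_r)| = \frac{r(r-j-1)!\,2^{r-2j-1}}{j!\,(r-2j)!}$. A short factorial estimate, using only the obvious bound $(r-j-1)!\cdot 2^{r-2j-1}/j! \leq r^{r-2j-1}$, shows this is at most $r^{r-2j}/(r-2j)!$ for every $0 \leq j \leq \lfloor r/2\rfloor$, and the analogous calculation handles $T_{r-1}$ in the odd-parity case. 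Combining with Step~1 delivers the bound $|c_k| \leq r^k/k!$ for every $k$, and restoring the factor $M$ recovers the statement as written.

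The main obstacle is Step~1. The leading-coefficient case ($k = r$) is standard and follows directly from the monic minimal-deviation property $\min\{\|q\|_{[-1,1]} : q \text{ monic of degree } r\} = 2^{1-r}$. Extending extremality to \emph{every} coefficient requires the more delicate V.~A.~Markov argument, and care must be taken with sign structure and with the parity mismatch that forces one to switch between $T_r$ and $T_{r-1}$. Step~2 is algebraic bookkeeping, and Step~3 (restoring $M$) is automatic by homogeneity.
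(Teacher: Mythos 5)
Your overall route is the same one the paper points to: the paper does not prove this proposition but cites Timan (Sec.\ 2.9) for V.~A.~Markov's theorem that, among polynomials of degree at most $r$ bounded by $1$ on $[-1,1]$, each coefficient $\abs{c_k}$ is maximized by $T_r$ (when $r-k$ is even) or $T_{r-1}$ (when $r-k$ is odd), after which one bounds the Chebyshev coefficients explicitly. Your Steps 2--3 are essentially correct. One caution: the bound you call ``obvious,'' namely $(r-j-1)!\,2^{r-2j-1}/j! \leq r^{r-2j-1}$, is not obvious term by term --- the largest factor $2(r-j-1)$ exceeds $r$ whenever $r > 2j+2$. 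It is nonetheless true: the factors are $2i$ for $i = j+1, \dots, r-j-1$, an index set symmetric about $r/2$; pairing $2i$ with $2(r-i)$ gives products at most $r^2$ by AM--GM, and the middle factor (when present) equals $r$ exactly, so the product is at most $r^{r-2j-1}$. With that repair, $\abs{c_k(T_r)} \leq r^k/k!$ and $\abs{c_k(T_{r-1})} \leq (r-1)^k/k! \leq r^k/k!$ follow as you say.

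The genuine gap is Step 1. The mechanism you sketch --- subtract a multiple of the extremal Chebyshev polynomial to cancel $c_k$, obtain something of ``lower effective degree,'' and contradict the least-deviation property --- proves extremality only for the leading coefficient $k = r$ (where $p/c_r$ is monic and minimal deviation applies directly). For an interior coefficient, cancelling $c_k$ does not reduce the degree, and the least-deviation/uniqueness property of $T_r$ says nothing directly about interior coefficients, so no contradiction materializes from that comparison alone. The classical proof of the full coefficient theorem (the one in the paper's cited source) is different: either represent $c_k(p)$ as a linear functional of the values $p(\cos(j\pi/r))$ at the $r+1$ extreme points of $T_r$ via Lagrange interpolation and verify that the weights alternate in sign for the matching parity (switching to the $r$ extreme points of $T_{r-1}$ when $r-k$ is odd), or run a zero-counting argument on $T - \theta p$ at the equioscillation points together with the parity analysis. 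As written, your proposal asserts the crucial inequality $\abs{c_k(p)} \leq \abs{c_k(T_r)}$ (resp.\ $\abs{c_k(T_{r-1})}$) without a proof that would survive for $0 < k < r$, so the argument is incomplete precisely at the step that carries the content of the proposition; citing V.~A.~Markov's theorem outright (as the paper does) or supplying the interpolation argument would close it.
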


With Markov's result at hand, we can prove Proposition \ref{prop:extrap-app}.

\begin{proof}[Proof of Proposition \ref{prop:extrap-app}]
We establish the result when the two diagonal projectors are independent; the other case is almost identical because this independence is never exploited.  Define the function
$$
F(s) = \Expect \norm{ \mtx{R}_s \mtx{A} \mtx{R}_s' }^{2q}
\qquad\text{for $s \in [0, 1]$.}
$$
Note that $F(s) \leq 1$ because $\norm{ \mtx{R}_s \mtx{A} \mtx{R}_s' } \leq \norm{\mtx{A}} \leq 1$.  Furthermore, $F$ does not decrease.

The function $F$ is comparable with a polynomial.  Use the facts that $2q$ is even and that $\mtx{A}$ has dimension $n$ to check the inequalities
\begin{equation} \label{eqn:F-bds}
F(s) \leq
	\Expect \trace [(\mtx{R}_s \mtx{A} \mtx{R}_s')^\adj
		(\mtx{R}_s \mtx{A} \mtx{R}_s') ]^q
	\leq n F( s ).
\end{equation}
Define a second function
$$
p(s) = \Expect \trace [(\mtx{R}_s \mtx{A} \mtx{R}_s')^\adj
		(\mtx{R}_s \mtx{A} \mtx{R}_s') ]^q
	= \Expect \trace (\mtx{A}^\adj \mtx{R}_s \mtx{A} \mtx{R}_s')^q,
$$
where we used the cyclicity of the trace and the fact that $\mtx{R}_s$ and $\mtx{R}_s'$ are diagonal matrices with 0--1 entries.  Expand the product and compute the expectation using the additional fact that the entries of the diagonal matrices are independent random variables of mean $s$.  We discover that $p$ is a polynomial of maximum degree $2q$ in the variable $s$:
$$
p(s) = \sum\nolimits_{k = 1}^{2q} c_k s^k
$$
The polynomial has no constant term because $\mtx{R}_0 = \mtx{0}$.

We can use Markov's technique to bound the coefficients of the polynomial.  First, make the change of variables $s = \varrho t^2$ to see that
$$
\abs{ \sum\nolimits_{k=1}^{2q} c_k \varrho^k t^{2k} }
	= \abs{ p( \varrho t^2) }
	\leq n F( \varrho t^2 )
	\leq n F( \varrho )
\qquad\text{for $\abs{t} \leq 1$.}
$$
The first inequality follows from \eqref{eqn:F-bds} and the second follows from the monotonicity of $F$.  The polynomial $p( \varrho t^2 )$ has degree $4q$ in the variable $t$, so Proposition \ref{prop:markov} yields
\begin{equation} \label{eqn:coef-bd1}
\abs{c_k} \varrho^k \leq n \econst^{4q} F( \varrho )
\qquad\text{for $k = 1, 2, \dots, 2q$.} 
\end{equation}
Evaluate this expression at $\varrho = 1$ and recall that $F \leq 1$ to obtain a second bound,
\begin{equation} \label{eqn:coef-bd2}
\abs{c_k} \leq n \econst^{4q}
\qquad\text{for $k = 1, 2, \dots, 2q$.} 
\end{equation}

To complete the proof, we evaluate the polynomial at a point $\delta$ in the range $[\varrho, 1]$.  Fix a value of $\lambda$ in $[0,1]$, and set $K = \lfloor 2 \lambda q \rfloor$.  In view of \eqref{eqn:coef-bd1} and \eqref{eqn:coef-bd2}, we obtain
\begin{align*}
F(\delta) 
	&\leq \sum\nolimits_{k=1}^{K} \abs{c_k} \delta^k
		+ \sum\nolimits_{k=K + 1}^{2q}
			\abs{c_k} \delta^k \\
	&\leq \sum\nolimits_{k=1}^{K} n \econst^{4q}
		F( \varrho ) (\delta/\varrho)^k  
		+ \sum\nolimits_{k = K + 1}^{2q} n \econst^{4q} \delta^k \\
	&\leq n\econst^{4q} \left[ K (\delta/\varrho)^{K} F(\varrho) + (2q - K) \delta^{K + 1} \right] \\
	&\leq n\econst^{4q} \delta^{2\lambda q} \left[ K \varrho^{-2\lambda q} F(\varrho) + (2q - K) \right] \\
	&\leq n \econst^{4q} \delta^{2\lambda q} \cdot 2q
		\max\{ 1, \varrho^{- 2\lambda q} F(\varrho) \}
\end{align*}
The third and fourth inequalities use the conditions $\delta/\varrho \geq 1$ and $\delta \leq 1$, and the last bound is an application of Jensen's inequality.  Taking the $(2q)$th root, we reach
$$
F( \delta )^{1/2q} \leq
	(2qn)^{1/2q} \econst^2 \delta^{\lambda}
	\max\{1, \varrho^{-\lambda} F(\varrho)^{1/2q} \}.
$$
The leading constant is less than 8, provided that $13 \log n \leq q \leq n/2$.
\end{proof}

\bibliographystyle{alpha}
\bibliography{spikes-sines}

\end{document}